\documentclass[12pt,reqno]{amsart}
\usepackage{stmaryrd}
\usepackage{amscd}
\usepackage{pifont}
\usepackage{amsmath}
\usepackage{amsthm}
\usepackage{amsfonts}
\usepackage{amssymb}
\usepackage{latexsym}
\usepackage{amstext}
\usepackage{array}
\usepackage{graphicx}
\usepackage{slashbox}
\usepackage{url}

\date{}
 \setlength{\textheight}{50pc}
\setlength{\textwidth}{33pc}

\allowdisplaybreaks[4] \footskip=15pt
\renewcommand{\uppercasenonmath}[1]{}

\topmargin=27pt
\newtheorem{thm}[subsection]{Theorem}
\newtheorem{cor}[subsection]{Corollary}
\newtheorem{conj}[subsection]{Conjection}
\newtheorem{Def}[subsection]{Definition}
\newtheorem{lem}[subsection]{Lemma}
\newtheorem{remark}{Remark}

\newtheorem{prop}[subsection]{Proposition}
\newtheorem{exm}[subsection]{Example}
  
\newcommand{\bthm}{\begin{thm} }
\newcommand{\ethm}{\end{thm} }
\newcommand{\bpro}{\begin{prop}}
\newcommand{\epro}{\end{prop}}
\newcommand{\bdf}{\begin{Def}}
\newcommand{\edf}{\end{Def}}
\newcommand{\bexm}{\begin{exm}}
\newcommand{\eexm}{\end{exm}}
\newcommand{\blem}{\begin{lem}}
\newcommand{\elem}{\end{lem}}
\newcommand{\bpf}{\begin{proof}}
\newcommand{\epf}{\end{proof}}
\newcommand{\bcor}{\begin{cor}}
\newcommand{\ecor}{\end{cor}}
\newcommand{\bcon}{\begin{Conj}}
\newcommand{\econ}{\end{Conj}}

\newcommand{\bea}{\begin{eqnarray}}
\newcommand{\eea}{\end{eqnarray}}
\newcommand{\brem}{\begin{remark}}
\newcommand{\erem}{\end{remark}}

\input txdtools

\begin{document}

\title{ \bf Maximal height of divisors of $x^{pq^{b}}-1$}
\keywords{Cyclotomic polynomial; Height of a cyclotomic polynomial}
\author{Shaozu Wang}
\address{Department of Mathematics, Nanjing University, Nanjing 210093, China}
\email{shaozuwang@gmail.com}
\thanks{Supported by NSFC (Nos. 11171141, 11071110),
 NSFJ (Nos. BK2010007, BK2010362), PAPD and the Cultivation Fund of the Key
Scientific and Technical Innovation Project, Ministry of Education
of China (No.708044).}
\maketitle
\begin{abstract}   The height of a polynomial $f(x)$ is the largest coefficient of $f(x)$ in absolute value. Let B(n) be the largest height of a polynomial in $\mathbb{Z}[x]$ dividing $x^n-1$.
In this paper we investigate the maximal height of divisors of $x^{pq^b}-1$ and prove that some conjectures on the maximal height of divisors of $x^{pq^b}-1$ are true.\end{abstract}



\section{\bf Introduction}
 The height H($f(x)$) of a polynomial $f(x)$ is the largest coefficient of $f(x)$ in absolute value.
Let $$\Phi_{n}(x)={\prod_{\substack{1\leq a\leq n \\(a,n)=1}}}(x-e^{\frac{2\pi ia}{n}})$$ be the $n$-th cyclotomic polynomial. The degree
of $\Phi_{n}(x)$ is $\phi(n)$ where $\phi$ is the Euler totient function. We have the factorization
$$x^{n}-1=\prod_{d\mid n}\Phi_{d}(x).$$
All of the polynomials in this paper will be assumed to have integral coefficients.
 The function
$\textrm{A}(n):=\textrm{H}(\Phi_{n}(x))$ was studied extensively (\cite{b1,b2,f1,k2}). Due to the following result, which can be verified directly, to determine A($n$)
it suffices to consider square-free values of $n$.
\begin{prop} Let $p$ be a prime. Then

 \emph{(1) } If $p\mid n$, then $\Phi_{pn}(x)=\Phi_{n}(x^{p})$.

   \emph{(2) } If $p\nmid n$, then $\Phi_{n}(x)\Phi_{pn}(x)=\Phi_{n}(x^{p})$.\end{prop}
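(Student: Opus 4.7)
The plan is to identify the roots of both sides of each identity inside the cyclic group of roots of unity, and then match degrees.

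First I would observe that $\eta\in\mathbb{C}$ is a root of $\Phi_n(x^p)$ precisely when $\eta^p$ is a primitive $n$-th root of unity. Writing $d$ for the order of $\eta$, the order of $\eta^p$ equals $d/\gcd(d,p)$, so the condition becomes $d=n\gcd(d,p)$. This leaves exactly two possibilities: $d=n$ (with $\gcd(d,p)=1$) or $d=np$ (with $\gcd(d,p)=p$, forcing $p\mid d$).

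Next I would split on the hypothesis. In part (1), where $p\mid n$, the case $d=n$ is impossible, because then $\eta^p$ would have order $n/p<n$, contradicting primitivity. So only $d=np$ occurs, and conversely every primitive $np$-th root of unity has its $p$-th power of order $n$. Hence $\Phi_n(x^p)$ and $\Phi_{np}(x)$ share the same simple roots, and a degree check using $\phi(p^{k+1}m)=p\,\phi(p^k m)$ for $k\ge 1$ matches them as monic polynomials. In part (2), where $p\nmid n$, both cases are realized: an $\eta$ of order $n$ has $\eta^p$ again of order $n$ (since $\gcd(n,p)=1$), and an $\eta$ of order $np$ has $\eta^p$ of order $n$ as well. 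So the roots of $\Phi_n(x^p)$ partition into the primitive $n$-th and primitive $np$-th roots of unity, yielding $\Phi_n(x)\Phi_{np}(x)=\Phi_n(x^p)$ after the degree check $\phi(n)+\phi(np)=\phi(n)(1+(p-1))=p\,\phi(n)$.

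There is no genuine obstacle here; the argument is essentially bookkeeping about orders of elements in a cyclic group. The only step requiring any care is verifying that the possibilities $d=n$ and $d=np$ are exhaustive and that each case is consistent with the assumption on $p\mid n$ versus $p\nmid n$. An alternative route via M\"obius inversion of $x^n-1=\prod_{d\mid n}\Phi_d(x)$, comparing the identities $x^{np}-1=\prod_{e\mid np}\Phi_e(x)$ and $(x^p)^n-1=\prod_{d\mid n}\Phi_d(x^p)$, would also work but is a little heavier; the direct root-based approach is cleaner and is the one I would carry out.
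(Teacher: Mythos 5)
Your argument is correct. Note that the paper itself offers no proof of this proposition, dismissing it as something that ``can be verified directly,'' so there is no in-text argument to compare against; your root-counting proof is the standard textbook approach and supplies exactly the missing verification. The case split on $\gcd(d,p)\in\{1,p\}$ is exhaustive, the consistency checks under $p\mid n$ versus $p\nmid n$ are handled properly, and the degree counts $\phi(np)=p\,\phi(n)$ (for $p\mid n$) and $\phi(n)+\phi(np)=p\,\phi(n)$ (for $p\nmid n$) close the argument since all polynomials involved are monic with simple roots.
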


This proposition implies that if $p|n$, then $\textrm{A}(pn)=\textrm{A}(n)$. It is also easy to verify that if $n$ is odd, then
$\Phi_{2n}(x)=\Phi_{n}(-x)$. So $\textrm{A}(2n)=\textrm{A}(n)$.

In \cite{p1} Pomerance and Ryan introduced the function $$\textrm{B}(n):={\rm max}\{\textrm{H}(f): f\mid x^{n}-1\ \textrm{and}\ f\in \mathbb{Z}[x]\}.$$
They proved that $\textrm{B}(n)=1$ if and only if $n=p^{l}$
and $\textrm{B}(pq)$=min\{$p,q$\} where $p$ and $q$ are distinct primes. Kaplan [5] obtained that $\textrm{B}(pq^{2})$=min\{$p$,$q^{2}$\}
for any distinct primes $p$ and $q$. And Ryan et al \cite{r1} proved a lower bound for $\textrm{B}(p^aq^b)$ and made the following conjectures.

\begin{conj}\emph{([8])} For a fixed odd prime $p$ and fixed positive integer $b$, the finite list of values $\emph{B}(pq^{b})$ as $p<q$ varies are all divisible by $p$.\end{conj}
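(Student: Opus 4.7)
The plan is to prove the stronger claim that $B(pq^b) = p$ whenever $p < q$ are distinct primes and $b \geq 1$; this implies the conjecture at once. The cases $b = 1$ and $b = 2$ are precisely the results of Pomerance--Ryan and Kaplan already cited, so the new content is the inductive step for $b \geq 3$.

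The starting point is the factorization $x^{pq^b} - 1 = (x^{q^b} - 1) \cdot \Phi_p(x^{q^b})$, whose two factors have disjoint cyclotomic content $\{\Phi_{q^i}\}_{i=0}^{b}$ and $\{\Phi_{pq^i}\}_{i=0}^{b}$. Since $x^{pq^b} - 1$ is monic and square-free, any $f \in \mathbb{Z}[x]$ dividing it is, up to sign, a product of a subset of these cyclotomic polynomials, and therefore factors uniquely as $f = u(x) v(x)$ with $u \mid x^{q^b} - 1$ and $v \mid \Phi_p(x^{q^b})$. By Pomerance--Ryan, $B(q^b) = 1$, so every coefficient of $u$ lies in $\{-1, 0, 1\}$, and the whole problem reduces to controlling $v$ and the convolution $uv$.

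The lower bound $B(pq^b) \geq p$ is produced by the explicit divisor $f = \Phi_p(x) \cdot \prod_{i=1}^{b} \Phi_{q^i}(x) = \Phi_p(x) \cdot \frac{x^{q^b}-1}{x-1}$. The coefficient of $x^k$ in this product counts the pairs $(i, j)$ with $i + j = k$, $0 \leq i \leq p-1$, $0 \leq j \leq q^b - 1$; because $q^b \geq q > p$, this count equals $p$ for every $k$ in the middle range $[p-1,\, q^b - 1]$.

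The upper bound $H(f) \leq p$ is the main obstacle. I would proceed by induction on $b$, with Kaplan's theorem $B(pq^2) = p$ as base. For the inductive step, factor $v = v_0 \cdot w$ where $v_0 \mid \Phi_p(x^{q^{b-1}})$ and $w \in \{1, \Phi_{pq^b}(x)\}$; since $\Phi_{pq^b}(x) = \Phi_{pq}(x^{q^{b-1}})$ has $0/\pm 1$ coefficients supported on multiples of $q^{b-1}$, multiplication by $w$ acts as a sparse perturbation whose effect on the height of $u v_0$ one can track directly. A central analytical tool is the congruence $\Phi_p(x^{q^b}) \equiv (x^{q^b}-1)^{p-1} \pmod{p}$ (obtained from $\Phi_p(y) \equiv (y-1)^{p-1} \pmod{p}$), which forces $v \bmod p$ to divide $(x^{q^b}-1)^{p-1}$ in $\mathbb{F}_p[x]$; combined with the $0/\pm 1$ nature of $u$, this yields a congruence constraint on each coefficient of $uv$ modulo $p$. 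The hard part, and the main technical obstacle, is to upgrade these modular constraints to the absolute bound $|(uv)_k| \leq p$: I expect this will require a coefficient-by-coefficient case analysis, split according to which of the high-order factors $\Phi_{q^b}$ and $\Phi_{pq^b}$ appear in $f$, exploiting the sparse support $\{0, q^b, 2q^b, \ldots, (p-1)q^b\}$ of $\Phi_p(x^{q^b})$ to limit how many nonzero terms of $uv$ can overlap at a single degree. Converting the mod-$p$ information into this absolute coefficient bound is where I anticipate the bulk of the technical work.
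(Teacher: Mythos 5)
Your plan rests on the claim that $B(pq^b)=p$ for all $p<q$ and all $b\geq 1$, but this is false for $b\geq 3$. Take $p=3$, $q=5$, $b=3$: the paper's Theorem 1.6 gives $B(pq^3)=\max\{(\sigma+1)p,(p-\sigma-1)p\}$ where $\rho p+\sigma q=(p-1)(q-1)$; here $8=3\rho+5\sigma$ forces $\sigma=1$, so $B(3\cdot 5^3)=6>3$. (The divisor realizing height $6$ is $\Phi_3(x)\Phi_5(x)\Phi_{3\cdot 5}(x)\Phi_{5^2}(x)$, which in your decomposition has $u=\Phi_5(x)\Phi_{5^2}(x)$ and $v=\Phi_3(x)\Phi_{3\cdot 5}(x)$, so $v\neq 1,\Phi_{pq^b}$ and the ``sparse perturbation'' step does not even apply.) More generally the paper proves Conjecture 1.3(2), namely $B(pq^b)>p$ for all $b>2$, and Theorem 1.5 gives the unbounded formula $B(3q^b)=3\cdot 2^{\lfloor (b-1)/2\rfloor}$. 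So no amount of technical work can carry your upper-bound step through; the target bound $H(f)\leq p$ is unattainable.

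The lower bound $B(pq^b)\geq p$ via $\Phi_p(x)\cdot(x^{q^b}-1)/(x-1)$ is correct but not the content of the conjecture, which asks only for divisibility $p\mid B(pq^b)$, not equality. The paper's actual argument (Theorem 1.4) proves this divisibility without pinning down the value: it decomposes $h(x)=f(x)g(x^q)$ with $f\mid x^{pq}-1$ and $g\mid \prod_{i=1}^{b-1}\Phi_{q^i}(x)\Phi_{pq^i}(x)$, shows $H(f(x)g(x^q))\leq p\,H(g(x))$ by a case analysis using the Lam--Leung description of $\Phi_{pq}$, and then shows the extremal choice $f=\Phi_p\Phi_q$ attains $H(\Phi_p\Phi_q\,g(x^q))=p\,H(g)$, giving $B(pq^b)=p\,H_b$ where $H_b$ is the maximal height over such $g$. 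Your factorization $h=uv$ with $u\mid x^{q^b}-1$, $v\mid\Phi_p(x^{q^b})$ is a legitimate alternative split of the same cyclotomic factors, and the congruence $\Phi_p(y)\equiv(y-1)^{p-1}\pmod p$ is true, but neither rescues the plan: you would still need to identify a uniformly extremal factor and bound the remaining height multiplicatively, which is exactly what the paper's choice of $f=\Phi_p\Phi_q$ accomplishes and your choice of split makes harder to see.
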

\begin{conj}\emph{([8])} Let $p<q$ be odd primes.

\emph{(1)} For any positive integer $b$, $\emph{B}(2q^{b})=2$.

\emph{(2)} For any positive integer $b>2$, $\emph{B}(pq^{b})>p$.\end{conj}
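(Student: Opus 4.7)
The plan is to treat the three statements in order of increasing difficulty, starting with Conjecture 3(1) and culminating in the divisibility claim of Conjecture 2.

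\textbf{Conjecture 3(1)}, $B(2q^{b})=2$. The factorization $x^{2q^{b}}-1=(x^{q^{b}}-1)(x^{q^{b}}+1)$ has coprime factors, so any divisor of the left side decomposes uniquely as $f_{1}f_{2}$ with $f_{1}\mid x^{q^{b}}-1$ and $f_{2}\mid x^{q^{b}}+1$. The Pomerance--Ryan theorem $B(q^{b})=1$ forces $f_{1}\in\{0,\pm 1\}[x]$, and the identity $\Phi_{2q^{i}}(x)=\Phi_{q^{i}}(-x)$ recalled after Proposition 1.1 lets me write $f_{2}(x)=g(-x)$ for some $g\mid x^{q^{b}}-1$, so $f_{2}\in\{0,\pm 1\}[x]$ as well. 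The lower bound $B(2q^{b})\ge 2$ is witnessed by $(x+1)\Phi_{q}(x)=1+2x+\cdots+2x^{q-1}+x^{q}$. For the upper bound I would exploit the fact that a product $\prod_{i\in I}\Phi_{q^{i}}(x)$ is precisely $\sum_{\mathbf j}x^{\sum_{i}j_{i}q^{i-1}}$, the indicator polynomial of a set of nonnegative integers with restricted base-$q$ digits; writing each target exponent $k$ in its base-$q$ expansion, I would show that $k$ can arise as a sum of a contribution from $f_{1}$ and one from $f_{2}$ in at most two ways, giving $H(f_{1}f_{2})\le 2$.

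\textbf{Conjectures 2 and 3(2)}. Both rest on the congruence $x^{pq^{b}}-1\equiv(x^{q^{b}}-1)^{p}\pmod{p}$ together with the decomposition $f=f_{\bar p}\cdot f_{p}$, where $f_{\bar p}$ gathers the cyclotomic factors $\Phi_{d}$ of $f$ with $d\mid q^{b}$ and $f_{p}$ those with $p\mid d$. Pomerance--Ryan applied to $f_{\bar p}\mid x^{q^{b}}-1$ gives $H(f_{\bar p})\le 1$, while Proposition 1.1(2) yields $\Phi_{pq^{i}}(x)=\Phi_{q^{i}}(x^{p})/\Phi_{q^{i}}(x)$, making the $p$-adic behavior of $f_{p}$ transparent. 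For Conjecture 2 I would argue that, after the cancellations forced by the mod-$p$ congruence, the maximum coefficient achieved by any divisor is always an integer multiple of $p$, giving $p\mid B(pq^{b})$. For Conjecture 3(2) I would construct an explicit divisor whose height strictly exceeds $p$ using $\Phi_{pq^{i}}(x)=\Phi_{pq}(x^{q^{i-1}})$ from Proposition 1.1(1) for at least three distinct $i\in\{1,\dots,b\}$: the extra ``scale'' $q^{i-1}$ available when $b\ge 3$ produces overlapping contributions whose collision cannot occur in Kaplan's tight case $b=2$, which is consistent with the equality $B(pq^{2})=\min\{p,q^{2}\}=p$.

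\textbf{Main obstacle.} The hard step is the upper bound in Conjecture 2, namely that \emph{every} height-attaining divisor has height divisible by $p$, not merely that some such divisor exists. This requires ruling out intermediate heights $kp+r$ with $1\le r\le p-1$, and I expect it to hinge on a coefficient-by-coefficient analysis combining the mod-$p$ congruence with the $\{0,\pm 1\}$ constraint on $f_{\bar p}$, handled separately according to the pattern of indices $i$ for which $\Phi_{pq^{i}}$ appears as a factor of $f_{p}$.
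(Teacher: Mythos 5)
The central step you propose for part (1)—that after the factorization $x^{2q^{b}}-1=(x^{q^{b}}-1)(x^{q^{b}}+1)$ and writing $h=f_{1}f_{2}$, a given exponent $k$ ``can arise as a sum of a contribution from $f_{1}$ and one from $f_{2}$ in at most two ways''—is simply false. Take $q=3$, $f_{1}=\Phi_{q}(x)\Phi_{q^{2}}(x)=1+x+\cdots+x^{8}$ and $f_{2}=\Phi_{2q}(x)=1-x+x^{2}$. The coefficient of $x^{3}$ in $f_{1}f_{2}$ receives three contributions, from $(m_{1},m_{2})=(1,2),(2,1),(3,0)$, with signed values $+1,-1,+1$; the result is $1$ only because the signs cancel. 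In general the number of representations $k=m_{1}+m_{2}$ with $m_{1}\in\mathrm{supp}(f_{1})$, $m_{2}\in\mathrm{supp}(f_{2})$ can be large whenever the exponent sets overlap digit positions, and the bound $H(f_{1}f_{2})\le 2$ comes from cancellation, not from a counting argument. The paper avoids this entirely: its Theorem 1.4 gives $\mathrm{B}(pq^{b})=p\,\mathrm{H}_{b}$ where $\mathrm{H}_{b}=\max\{\mathrm{H}(g):g\mid\prod_{i=1}^{b-1}\Phi_{q^{i}}(x)\Phi_{pq^{i}}(x)\}$, and for $p=2$ one shows $\mathrm{H}_{b}=\mathrm{H}_{b-1}$ by a short case analysis on $f\mid\Phi_{q}(x^{2})$, reducing to $\mathrm{H}_{3}=1$ from Lemma 2.5.

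For Conjecture 2 (divisibility) and Conjecture 3(2), you yourself flag that the upper bound ``every height-attaining divisor has height divisible by $p$'' is not established; the mod-$p$ congruence $x^{pq^{b}}-1\equiv(x^{q^{b}}-1)^{p}\pmod p$ does not by itself control integer coefficients, and ruling out intermediate heights $kp+r$ requires something stronger. The paper's route is more concrete and works without a modular argument: decompose $h(x)=f(x)g(x^{q})$ with $f\mid x^{pq}-1$, prove $\mathrm{H}(f(x)g(x^{q}))\le p\,\mathrm{H}(g)$ case by case (the nontrivial cases being handled via the Lam--Leung description of $\Phi_{pq}$, Proposition 2.3), and verify that $f=\Phi_{p}\Phi_{q}$ achieves equality; this yields $\mathrm{B}(pq^{b})=p\,\mathrm{H}_{b}$ outright, so divisibility by $p$ is automatic. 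For part (2), ``overlapping contributions whose collision cannot occur in Kaplan's case $b=2$'' is too vague to turn into a height lower bound; the paper instead obtains it as a consequence of the exact formula $\mathrm{B}(pq^{3})=p\max\{\sigma+1,p-\sigma-1\}\ge 2p$ (Theorem 1.6), together with the monotonicity $\mathrm{H}_{b}\ge\mathrm{H}_{3}$.

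So your proposal correctly identifies the right raw materials ($\Phi_{2n}(x)=\Phi_{n}(-x)$, the product structure of divisors, Lam--Leung), but it is missing the Theorem 1.4 reduction which is the paper's engine for all three claims, and the one step you do describe in detail for part (1) does not hold as stated.
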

In \cite{d1}, Decker and Moree established various results suggesting that the divisors of $x^{n}-1$ have the tendency to be strongly coefficient convex and have small coefficients. The case where $n=pq^2$ with $p$ and $q$ primes is studied in detail.

We would eventually like to give an explicit formula for $\textrm{B}(pq^{b})$, but this appears
very difficult. In this paper, we will give this classification in the special case where $p<q$ and prove that the above conjectures are ture.
In section 2, we give some lemmas which will play important roles in the proof of our main results. In section 3, we  prove that Conjecture 1.2  as follows.
\bthm Let $p<q$ be primes. For any positive integer $b$, $p|\emph{B}(pq^{b})$. \ethm
By Theorem 1.4, we will show that the first part of Conjecture 1.3 is true. Moreover, we will give an explicit formula for $\textrm{B}(3q^{b})$ as follows.

\bthm Let $p<q$ be primes. For any positive integer $b$, $$\emph{B}(pq^b)=\left\{\begin{array}{ll}
2,&{\rm if}\;\;p=2,\\
3\cdot 2^{[\frac{b-1}{2}]},&{\rm if}\;\;p=3.\end{array}\right.$$  \ethm

In section 4, we will give an explicit formula for $\textrm{B}(pq^3)$ as follows
\bthm Let $p,\;q$ be distinct odd primes and $\rho$ and $\sigma$ denote the unique positive integers such that $\rho p+\sigma q=(p-1)(q-1)$. Then
$$\emph{B}(pq^3)= {\rm max}\{{\rm min}\{p,q^3\},\emph{H}(\Phi_{p}(x)\Phi_{q}(x)\Phi_{pq^2}(x)\Phi_{q^{3}}(x))\},$$ i.e.
$$\emph{B}(pq^3)=\left\{\begin{array}{ll}\emph{H}(\Phi_{p}(x)\Phi_{q}(x)\Phi_{pq^{2}}(x)\Phi_{q^{3}}(x)),&{\rm if}\;\;p<q,\\
{\rm max}\{p,\ \emph{H}(\Phi_{p}(x)\Phi_{q}(x)\Phi_{pq^{2}}(x)\Phi_{q^{3}}(x)) \},&{\rm if}\;\;q< p< q^2,\\
p,&{\rm if}\;\;q^2< p< q^3,\\
q^3,&{\rm if}\;\;p> q^3.\end{array}\right.$$ Moreover, if $p<q$, we have ${\rm B}(pq^3)={\rm max}\{(\sigma+1)p,\ (p-(\sigma+1))p\}$.\ethm
 As an application of Theorem 1.6, the second part of Conjecture 1.3 is true. The rest of the paper consider the formulas for $\textrm{B}(pq^4)$ and $\textrm{B}(pq^5)$. We shall prove the following result
\bthm Let $p<q$ be  distinct odd primes. Then$$\emph{B}(pq^b)=\left\{\begin{array}{ll}{\rm max}\{p\emph{H}(\Phi_{pq}(x)\Phi_{pq^{2}}(x)\Phi_{q^{3}}(x)), \;\; p\emph{H}(\Phi_{pq}(x)\Phi_{q^{2}}(x))\},&{\rm if}\;\;b=4,\\

p\emph{H}^2(\Phi_{pq}(x)\Phi_{q^{2}}(x)),&{\rm if}\;\;b=5.\end{array}\right.$$ \ethm
By Theorem 1.7, we prove that $\textrm{B}(pq^b)$ is determined by
$q(\mbox{mod}\  p)$.
 \bthm Let $p<q<r$ be distinct odd primes and $q\equiv \pm r\ (\mbox{mod}\  p)$. If $b\leq 5$, then $\emph{B}(pq^b)=\emph{B}(pr^b)$.\ethm

\section{\bf Preliminaries }
To show our main results, some preparations are needed.

\begin{lem}\emph{([5])}Let $\emph{T}(f(x))$ denote the sum of the absolute values of the coefficients of the polynomials $f(x)$. Then for any two polynomials $f(x)$ and $g(x)$, $\emph{H}(f(x)g(x))\leq \emph{T}(f(x))\emph{H}(g(x))$ and $\emph{T}(f(x)g(x))\leq \emph{T}(f(x))\emph{T}(g(x))$. \end{lem}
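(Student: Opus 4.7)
The plan is to write both polynomials in terms of their coefficients and apply the triangle inequality to the coefficients of the product. Setting $f(x)=\sum_{i=0}^m a_i x^i$ and $g(x)=\sum_{j=0}^n b_j x^j$, the product can be written as $\sum_k c_k x^k$ with the convolution formula $c_k=\sum_{i+j=k}a_i b_j$. This identity is the only structural input the lemma requires; everything else is a term-by-term estimate.

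For the first inequality $H(fg)\le T(f)H(g)$, I would fix an index $k$, apply the triangle inequality to get $|c_k|\le\sum_{i+j=k}|a_i|\,|b_j|$, uniformly bound each $|b_j|$ by $H(g)$ to pull it out of the sum, and then note that the remaining sum $\sum_{i+j=k}|a_i|$ is at most $\sum_{i=0}^m|a_i|=T(f)$ because the set of $i$ appearing on the left is a subset of $\{0,\dots,m\}$. Maximizing the resulting bound over $k$ gives the claim.

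For the second inequality $T(fg)\le T(f)T(g)$, I would sum $|c_k|$ over all $k$, push the triangle inequality through the convolution, and then swap the order of summation so the double sum factors as $\left(\sum_i|a_i|\right)\left(\sum_j|b_j|\right)=T(f)T(g)$. The reindexing is legitimate because every pair $(i,j)$ with $0\le i\le m$, $0\le j\le n$ appears exactly once as $(i,j)$ with $i+j=k$ for $k=i+j$.

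There is no genuine obstacle: the lemma is a clean packaging of the triangle inequality for convolutions. The only subtle point is why the first bound uses $T(f)$ rather than $H(f)$ — for a fixed $k$, the constraint $i+j=k$ selects exactly one term $b_{k-i}$ from $g$ per admissible $i$, so to obtain a bound on $|c_k|$ that is uniform in $k$ one is forced to sum the $|a_i|$ over all $i$ and pick up $T(f)$. Replacing $T(f)$ by $H(f)$ would require an additional count of the number of admissible $i$, which depends on $k$. The lemma will be invoked repeatedly in the sequel to bound heights of products of cyclotomic polynomials, which is its real purpose.
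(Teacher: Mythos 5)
Your proof is correct and is the standard argument; the paper itself gives no proof for this lemma, attributing it to Kaplan (reference [5]), so there is nothing to compare against in the text. Your convolution plus triangle-inequality argument is exactly what one expects, and your closing remark correctly explains why $T(f)$ rather than $H(f)$ appears in the first bound.
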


\begin{lem} Let $f(x)=\sum_{i=0}^{n}a_{i}x^{i}$ and $g(x)=\sum_{j=0}^{m}b_{j}x^{kj}$. If there exists $s\in \mathbb{Z}$ such that $sk>n$, then $\emph{H}(f(x)g(x))\leq s\emph{H}(f(x))\emph{H}(g(x))$.\end{lem}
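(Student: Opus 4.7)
The plan is to expand the coefficients of $f(x)g(x)$ directly and count, for each fixed power of $x$, how many products $a_ib_j$ can contribute. Writing
\[
f(x)g(x)=\sum_{N\ge 0} c_N\, x^N,\qquad c_N=\sum_{\substack{0\le i\le n,\ 0\le j\le m\\ i+kj=N}} a_i b_j,
\]
I would observe that once $N$ is fixed, the constraint $i+kj=N$ forces $i\equiv N\pmod{k}$, and $j=(N-i)/k$ is then uniquely determined by $i$. Thus the pairs $(i,j)$ contributing to $c_N$ are in bijection with the indices $i\in\{0,1,\dots,n\}$ that lie in a single residue class modulo $k$, and the number of such $i$ is at most $\lfloor n/k\rfloor+1$.

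Next I would turn the hypothesis $sk>n$ into a bound on the number of summands. Since $s$ is an integer, $sk>n$ is equivalent to $s>n/k$, which forces $s\ge \lfloor n/k\rfloor+1$. Combining this with the previous paragraph, at most $s$ products $a_ib_j$ appear in $c_N$. Each such product satisfies $|a_ib_j|\le \mathrm{H}(f)\mathrm{H}(g)$, so the triangle inequality yields
\[
|c_N|\le s\,\mathrm{H}(f)\,\mathrm{H}(g),
\]
uniformly in $N$, which is exactly the desired bound on $\mathrm{H}(f(x)g(x))$.

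The argument is essentially a pigeonhole count on residue classes of the exponent; the only step requiring any care is the translation of the strict inequality $sk>n$, together with the integrality of $s$, into the ceiling bound $s\ge\lfloor n/k\rfloor+1$. I do not foresee a substantive obstacle beyond this bookkeeping.
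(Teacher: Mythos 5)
Your proof is correct and takes essentially the same approach as the paper: both observe that the indices $i$ contributing to a fixed coefficient of $f(x)g(x)$ lie in a single residue class modulo $k$ inside $\{0,\dots,n\}$, so at most $s$ products $a_ib_j$ can collide. The paper phrases this as a proof by contradiction (assuming $s+1$ collisions forces $n\ge sk$), while you count directly via $\lfloor n/k\rfloor+1\le s$; the two are equivalent, and your version is arguably cleaner, avoiding the paper's typographical $m$/$k$ confusion.
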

\begin{proof} Since $f(x)g(x)=\sum_{i=0}^{n}\sum_{j=0}^{k}a_{i}b_{j}x^{i+mj}$, the coefficient of $x^{l}$ in the product $f(x)g(x)$ is
$\sum_{i+kj=l}a_{i}b_{j}$ and $\mid a_{i}b_{j}\mid\leq \textrm{H}(f(x))\textrm{H}(g(x))$ where $0\leq l\leq n+mk$. If there exist integers $0\leq i_{1}< i_{2}<\cdot\cdot\cdot< i_{s+1}\leq n$, $0\leq j_{s+1}< j_{s}<\cdot\cdot\cdot< j_{1}\leq m$ such that $$x^{i_{1}+kj_{1}}=x^{i_{2}+kj_{2}}=\cdot\cdot\cdot=x^{i_{s+1}+kj_{s+1}},$$ then $i_{s+1}-i_{1}=k(j_{1}-j_{s+1})$. Hence we have $n\geq sm$ which obviously does not hold. Therefore we have $\mid \sum_{i+mj=l}a_{i}b_{j}\mid\leq s\textrm{H}(f(x))\textrm{H}(g(x))$ and then $$\textrm{H}(f(x)g(x))\leq s\textrm{H}(f(x))\textrm{H}(g(x)).$$\end{proof}\qed

By Lemma 2.2, for distinct primes $p, q$, we have
\begin{eqnarray}
\begin{cases}
\textrm{H}(\Phi_{q}(x)g(x^{q}))=\textrm{H}(g(x^{q}))=\textrm{H}(g(x)),\cr \textrm{H}(\Phi_{q}(x^p)g(x^{q}))=\textrm{H}(g(x^{q}))=\textrm{H}(g(x)), &\textrm{if}\ (p,q)=1.
\end{cases}
\end{eqnarray}
For example, if $b>1$, we have $\textrm{H}(\Phi_{q}(x^p)\Phi_{pq^b}(x))=1$.

We will use the structure of the coefficients of $\Phi_{pq}(x)$ to get some results. Let $\Phi_{pq}(x)=\sum_{i=0}^{(p-1)(q-1)}a_{i}x^{i}$. In [7], Lam and Leung gave a detailed analysis of these coefficients $a_{i}$ and proved the following proposition.
\begin{prop}  Let $p$ and $q$ be distinct primes and $\rho$ and $\sigma$ denoted the unique positive integers such that $\rho p+\sigma q=(p-1)(q-1)$. Then $a_{k}=1$
if and only if $k=ip+jq$ for some $i\in [0,\ \rho]$ and $j\in [0,\ \sigma]$. Also $a_{k}=-1$
if and only if $k+pq=ip+jq$ for some $i\in [\rho+1,\ q-1]$ and $j\in [\sigma+1,\ p-1]$. Otherwise $a_{k}=0$.\end{prop}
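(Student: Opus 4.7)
The plan is to establish the closed-form expression
\[
\Phi_{pq}(x) \;=\; \sum_{\substack{0 \leq i \leq \rho \\ 0 \leq j \leq \sigma}} x^{ip+jq} \;-\; \sum_{\substack{\rho+1 \leq i \leq q-1 \\ \sigma+1 \leq j \leq p-1}} x^{ip+jq-pq},
\]
from which the sign/support rules for each coefficient $a_k$ can be read off directly by matching the two sums against the three claimed cases.

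To verify the identity, I would start from the telescoping consequence of $x^{pq}-1=\Phi_{1}\Phi_{p}\Phi_{q}\Phi_{pq}$, namely
\[
\Phi_{pq}(x)(1-x^{p})(1-x^{q}) = (1-x)(1-x^{pq}),
\]
and denote by $F(x)$ the conjectured right-hand side of the displayed identity. Since $(1-x^{p})(1-x^{q})$ is a nonzero polynomial, it suffices to prove $F(x)(1-x^{p})(1-x^{q}) = (1-x)(1-x^{pq})$. Using the elementary telescopes
\[
\Bigl(\sum_{i=0}^{\rho} x^{ip}\Bigr)(1-x^{p}) = 1 - x^{(\rho+1)p}, \qquad \Bigl(\sum_{i=\rho+1}^{q-1} x^{ip}\Bigr)(1-x^{p}) = x^{(\rho+1)p} - x^{qp},
\]
and the analogous identities in $q$, together with the key relation $(\rho+1)p + (\sigma+1)q = pq+1$ (a rewriting of $\rho p+\sigma q=(p-1)(q-1)$), a direct expansion collapses $F(x)(1-x^{p})(1-x^{q})$ to $1-x-x^{pq}+x^{pq+1}=(1-x)(1-x^{pq})$, as required.

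It then remains to show that the exponents appearing in $F(x)$ are pairwise distinct, so that the signs of $F$'s coefficients are read off term by term. Within the positive sum, an equality $i_{1}p+j_{1}q=i_{2}p+j_{2}q$ forces $q\mid i_{1}-i_{2}$; but from $\rho p\le (p-1)(q-1)<pq$ one has $\rho<q$, so $|i_{1}-i_{2}|\le \rho<q$ forces $(i_{1},j_{1})=(i_{2},j_{2})$. The same argument handles the negative sum, using $\sigma<p$. A cross collision between the two sums would require $(i_{2}-i_{1})p+(j_{2}-j_{1})q=pq$ with $i_{2}-i_{1}\in[1,q-1]$, which again contradicts $q\mid i_{2}-i_{1}$.

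Combining these steps, the positive-sum exponents occupy a subset of $[0,\,(p-1)(q-1)]$, the negative-sum exponents occupy a subset of $[1,\,pq-p-q]$, the two subsets are disjoint, and on these subsets $F$ has coefficients $+1$ and $-1$ respectively, with $a_k=0$ elsewhere. This matches exactly the three cases of the claim (noting that the negative sum contributes $a_k=-1$ precisely when $k+pq=ip+jq$ for some $(i,j)$ in the stated range). I expect the main obstacle to be the distinctness bookkeeping, which rests entirely on the tight bounds $\rho\le q-1$ and $\sigma\le p-1$ extracted from the defining relation $\rho p+\sigma q=(p-1)(q-1)$.
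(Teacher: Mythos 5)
The paper does not prove Proposition~2.3 at all; it simply cites it as a known result of Lam and Leung~\cite{l1}, so there is no in-paper argument to compare against. Your reconstruction is correct and complete: the functional equation $\Phi_{pq}(x)(1-x^p)(1-x^q)=(1-x)(1-x^{pq})$, the telescoping of the geometric partial sums split at $\rho$ and $\sigma$, the pivotal identity $(\rho+1)p+(\sigma+1)q=pq+1$, and the distinctness-of-exponents argument (via $q\nmid (i_1-i_2)$ for $|i_1-i_2|<q$) together do establish the closed form and hence the stated description of the coefficients; this is in fact the classical Lam--Leung proof.
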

\begin{lem} Let $f(x)=\sum_{s=0}^{n}a_{s}x^{qs}$. Then $$\emph{H}(\Phi_{pq}(x)f(x))\leq {\rm max}\{\sigma+1,\ (p-(\sigma+1))\}\emph{H}(f(x)).$$\end{lem}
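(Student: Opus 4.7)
The plan is to exploit the explicit structure of $\Phi_{pq}(x)$ from Proposition 2.3 together with the fact that $f(x)$ is supported only on powers of $x^q$. Write $\Phi_{pq}(x)=\sum_{k} c_{k}x^{k}$. By Proposition 2.3, $c_{k}=1$ if and only if $k=ip+jq$ with $(i,j)\in[0,\rho]\times[0,\sigma]$, and $c_{k}=-1$ if and only if $k+pq=ip+jq$ with $(i,j)\in[\rho+1,q-1]\times[\sigma+1,p-1]$. In either parametrization, $k\equiv ip\pmod{q}$, and since $\gcd(p,q)=1$ the map $i\mapsto ip\pmod{q}$ is a bijection on $\{0,1,\dots,q-1\}$.

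The first key observation is that the residues mod $q$ of the positively-signed indices $\{k:c_{k}=1\}$ and the negatively-signed indices $\{k:c_{k}=-1\}$ partition $\{0,1,\dots,q-1\}$ into two disjoint sets of sizes $\rho+1$ and $q-\rho-1$ respectively. The second key observation is counting: the positively-signed indices with a fixed residue $r=i_{0}p\pmod{q}$ (for some $i_{0}\in[0,\rho]$) are precisely $\{i_{0}p+jq:j\in[0,\sigma]\}$, giving at most $\sigma+1$ such indices; similarly each negative residue carries at most $(p-1)-(\sigma+1)+1=p-\sigma-1$ indices.

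Writing $f(x)=\sum_{s=0}^{n}a_{s}x^{qs}$, the coefficient of $x^{l}$ in $\Phi_{pq}(x)f(x)$ is $C_{l}=\sum_{k+qs=l}c_{k}a_{s}$, where the sum runs over those $k$ with $c_{k}\neq 0$ and $k\equiv l\pmod{q}$. By the first observation, all such $c_{k}$ share a single sign, so no enhanced cancellation argument is needed. By the triangle inequality,
$$|C_{l}|\;\leq\;\sum_{\substack{k:\,c_{k}\neq 0\\ k\equiv l\,(\mathrm{mod}\,q)}}|a_{(l-k)/q}|\;\leq\;\max\{\sigma+1,\ p-\sigma-1\}\,\textrm{H}(f(x)),$$
using the second observation to bound the number of nonzero summands. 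Taking the maximum over $l$ yields $\textrm{H}(\Phi_{pq}(x)f(x))\leq\max\{\sigma+1,\ p-(\sigma+1)\}\,\textrm{H}(f(x))$.

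The main obstacle is identifying the disjointness of positive and negative residues modulo $q$; once that is recognized, the counting within each residue class and the triangle-inequality step are routine consequences of the Lam--Leung parametrization in Proposition 2.3.
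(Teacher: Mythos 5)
Your proof is correct and follows essentially the same route as the paper: both rest on the Lam--Leung parametrization from Proposition 2.3, the key fact that positively- and negatively-signed exponents of $\Phi_{pq}$ fall into disjoint residue classes modulo $q$, and a count of at most $\sigma+1$ (resp.\ $p-\sigma-1$) contributions per residue class. The paper packages this as a split $\Phi_{pq}(x)f(x)=h_1(x)-h_2(x)$ with no shared monomials and bounds each piece, while you argue coefficient-by-coefficient via the triangle inequality, but the underlying observations are identical.
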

\begin{proof}
Let
 \begin{eqnarray}
\begin{cases}
h_{1}(x)=\sum_{s=0}^{n}\sum_{i=0}^{\rho}\sum_{j=0}^{\sigma}a_{s}x^{ip+jq+sq},\cr
h_{2}(x)=\sum_{s=0}^{n}\sum_{i=0}^{q-\rho-2}\sum_{j=0}^{p-\sigma-2}a_{s}x^{ip+jq+sq+1}.
\end{cases}
\end{eqnarray}
Then $\Phi_{pq}(x)f(x)=h_{1}(x)-h_{2}(x)$. By Proposition 2.3, $h_{1}(x)$ and $h_{2}(x)$ have no same monomials. Therefore $\textrm{H}(\Phi_{pq}(x)f(x))={\rm max}\{\textrm{H}(h_{1}(x)),\ \textrm{H}(h_{2}(x))\}.$
Fix $(i_{1},j_{1},s_{1})$. If there exists $(i,j,s)$ such that $x^{i_{1}p+j_{1}q+s_{1}q}=x^{ip+jq+sq}$, then $i=i_{1}$. Therefore the coefficient of $x^{i_{1}p+j_{1}q+s_{1}q}$ is $\sum_{j+s=j_{1}+s_{1}}a_{s}$. Since $0\leq j\leq \sigma$ and $|a_{s}|\leq \textrm{H}(f(x))$, we have $|\sum_{j+s=j_{1}+s_{1}}a_{s}|\leq (\sigma+1)\textrm{H}(f(x)).$ Therefore $\textrm{H}(h_{1}(x))\leq (\sigma+1)\textrm{H}(f(x))$.
A similar argument can show that $\textrm{H}(h_{1}(x))\leq (p-(\sigma+1))\textrm{H}(f(x)).$ Then $\textrm{H}(\Phi_{pq}(x)f(x))\leq {\rm max}\{(\sigma+1),\ (p-(\sigma+1))\}\textrm{H}(f(x)).$ This completes the proof of Lemma 2.4.
\end{proof}\qed
\begin{lem}   Let $p$ and $q$ be distinct primes and $g_{0}(x)=\Phi_{pq}(x)\Phi_{q^2}(x)$. Then

 \emph{(1) } If $g(x)\mid \Phi_{q}(x)\Phi_{pq}(x)\Phi_{q^2}(x)\Phi_{pq^2}(x)$ and $g(x)\neq g_{0}(x)$, then $\emph{H}(g(x))=1$.

   \emph{(2) } $1\leq \emph{H}(g_{0}(x))\leq p-1$.\end{lem}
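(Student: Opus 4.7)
The plan is to dispose of part (2) by a direct appeal to Lemma 2.4, and then prove part (1) by examining the sixteen divisors of the squarefree polynomial $\Phi_q(x)\Phi_{pq}(x)\Phi_{q^2}(x)\Phi_{pq^2}(x)$ and bounding each of their heights.

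For (2), apply Lemma 2.4 with $f(x)=\Phi_{q^2}(x)=\sum_{s=0}^{q-1}x^{qs}$, which has the required form $\sum a_s x^{qs}$ and height $1$. The lemma yields
$$H(g_0)\leq\max\{\sigma+1,\ p-\sigma-1\}.$$
From $\rho p+\sigma q=(p-1)(q-1)$ with $\rho\geq 1$, a short check gives $\sigma\leq p-2$, so the right-hand side is at most $p-1$. The lower bound $H(g_0)\geq 1$ is immediate, as $g_0$ has constant term $\Phi_{pq}(0)\Phi_{q^2}(0)=1$.

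For (1), Proposition 1.1 supplies the rewrites $\Phi_q(x)\Phi_{pq}(x)=\Phi_q(x^p)$, $\Phi_{q^2}(x)\Phi_{pq^2}(x)=\Phi_{q^2}(x^p)$, $\Phi_{q^2}(x)=\Phi_q(x^q)$, and $\Phi_{pq^2}(x)=\Phi_{pq}(x^q)$. Substituting these, every divisor of $\Phi_q\Phi_{pq}\Phi_{q^2}\Phi_{pq^2}$ apart from $g_0$, $\Phi_{pq}(x)\Phi_{q^2}(x)\Phi_{pq^2}(x)$, and $\Phi_{pq}(x)\Phi_{pq^2}(x)$ acquires the form $\Phi_q(x^a)\cdot h(x^q)$ with $a\in\{1,p\}$, so identity (2.1) gives height $1$. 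The divisor $\Phi_{pq}(x)\Phi_{q^2}(x)\Phi_{pq^2}(x)=\Phi_{pq}(x)\Phi_{q^2}(x^p)$ has height $1$ by Lemma 2.2 with $s=1$, since $\deg\Phi_{pq}=(p-1)(q-1)<pq$ and $\Phi_{q^2}(x^p)$ is supported on multiples of $pq$.

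The main obstacle is the remaining divisor $F(x):=\Phi_{pq}(x)\Phi_{pq^2}(x)$, to which neither (2.1) nor Lemma 2.2 with $s=1$ applies. To handle it, iterate Proposition 1.1 to obtain $\Phi_p(x^{q^2})=\Phi_p(x)\Phi_{pq}(x)\Phi_{pq^2}(x)$; writing $F(x)=\sum_l f_l x^l$ and equating coefficients in $\Phi_p(x)F(x)=\Phi_p(x^{q^2})$ yields $\sum_{i=0}^{p-1}f_{l-i}=g_l$, where $g_l$ is the indicator that $l\in S:=\{jq^2:0\leq j\leq p-1\}$. Subtracting this identity at $l$ and $l-1$ gives the recursion $f_l-f_{l-p}=g_l-g_{l-1}$; iterating it from the base values $f_l=g_l-g_{l-1}$ on $0\leq l\leq p-1$ leads to
$$f_l=S_l-S_{l-1},\qquad S_l:=\#\{s\in S:s\leq l,\ s\equiv l\pmod p\}.$$
Because $(q^2,p)=1$, the multiples $jq^2$ hit each residue class mod $p$ exactly once, so $S_l\in\{0,1\}$ for every $l$ and $f_l\in\{-1,0,1\}$. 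Hence $H(F)\leq 1$, completing the proof of (1).
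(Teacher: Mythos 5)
Your proposal is correct, and it is considerably more complete than the paper's one-line justification (``It's obvious by Lemma 2.1 and Lemma 2.2''). Part (2) is handled correctly either way: the paper would simply apply Lemma 2.2 with $s=p-1$ (since $(p-1)q>(p-1)(q-1)=\deg\Phi_{pq}$ and $\Phi_{q^2}(x)$ is supported on multiples of $q$), whereas you route through Lemma 2.4 to get $H(g_0)\le\max\{\sigma+1,\,p-\sigma-1\}\le p-1$ using $0\le\sigma\le p-2$ (note the needed inequality already follows from $\rho\ge 0$, so the invocation of ``$\rho\ge 1$'' is harmless but unnecessary); both routes are fine.

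The real substance is in part (1), and here you have identified a genuine weakness in the paper's stated proof: the divisor $\Phi_{pq}(x)\Phi_{pq^2}(x)$ is \emph{not} disposed of by Lemma 2.1 or Lemma 2.2 alone. Lemma 2.1 gives only $H(\Phi_{pq}\Phi_{pq^2})\le T(\Phi_{pq})$, and Lemma 2.2 (taking $g=\Phi_{pq^2}=\Phi_{pq}(x^q)$, so $k=q$, against $f=\Phi_{pq}$ of degree $(p-1)(q-1)$) forces $s\ge p-1$, yielding only $H(\Phi_{pq}\Phi_{pq^2})\le p-1$. Your extra step — writing $\Phi_p(x)\Phi_{pq}(x)\Phi_{pq^2}(x)=\Phi_p(x^{q^2})$, differencing the resulting convolution identity to get $f_l-f_{l-p}=g_l-g_{l-1}$, and telescoping to $f_l=S_l-S_{l-1}$ with $S_l\in\{0,1\}$ because $q^2$ is invertible mod $p$ — is exactly what is needed, and it is sound. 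This is a new idea beyond the cited lemmas and worth recording explicitly. One small imprecision: your sentence about ``every divisor ... acquires the form $\Phi_q(x^a)h(x^q)$'' does not literally cover the divisors $1$, $\Phi_{pq}$, $\Phi_{q^2}$, $\Phi_{pq^2}$, $\Phi_{q^2}\Phi_{pq^2}$; these are of course trivially of height $1$ (each is a cyclotomic polynomial of order at most $2$, or a polynomial supported on an arithmetic progression of spacing $\ge q$), but the statement should be rephrased so that it is true as written. With that wording fixed, the proof is complete and, in the case of $\Phi_{pq}\Phi_{pq^2}$, strictly stronger than what the paper offers.
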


\begin{proof} It's obvious by Lemma 2.1 and Lemma 2.2.
\end{proof}
\qed

\begin{lem} Let $g(x)=\Phi_{pq}(x)\Phi_{pq^2}(x)\Phi_{q^3}(x)=\sum_{i=0}^{deg g(x)}c_{i}x^{i}$. Then we have $c_{i}=c_{i-p}$ if $i\not\equiv 0,\;1\ (\mbox{mod}\  q^2)$.\end{lem}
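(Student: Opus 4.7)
The plan is to show that the polynomial $(x^p-1)g(x)$ has support contained in $\{i : i \equiv 0 \text{ or } 1 \pmod{q^2}\}$. Since $g(x)=\sum_i c_i x^i$ gives $(x^p-1)g(x)=\sum_i(c_{i-p}-c_i)x^i$ (with the convention $c_j=0$ outside $[0,\deg g]$), such a support condition forces $c_{i-p}-c_i=0$ for every $i\not\equiv 0,1\pmod{q^2}$, which is exactly the statement of the lemma.

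To execute this I would collapse the three cyclotomic factors via elementary identities. From $x^{pq^2}-1=\prod_{d\mid pq^2}\Phi_d(x)$, cancelling $\Phi_1\Phi_q\Phi_{q^2}=(x^{q^2}-1)$ gives
$$\Phi_p(x)\Phi_{pq}(x)\Phi_{pq^2}(x)=\frac{x^{pq^2}-1}{x^{q^2}-1}.$$
Using Proposition 1.1(1) (applied with the prime $q$ and $n=q^2$), $\Phi_{q^3}(x)=\Phi_q(x^{q^2})=(x^{q^3}-1)/(x^{q^2}-1)$. Combining these with $x^p-1=(x-1)\Phi_p(x)$ yields
$$(x^p-1)g(x)=(x-1)\cdot\frac{(x^{pq^2}-1)(x^{q^3}-1)}{(x^{q^2}-1)^2}.$$

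Setting $y=x^{q^2}$, so $x^{pq^2}=y^p$ and $x^{q^3}=y^q$, the right-hand side becomes $(x-1)\,Q(x^{q^2})$ where
$$Q(y)=\frac{(y^p-1)(y^q-1)}{(y-1)^2}=(1+y+\cdots+y^{p-1})(1+y+\cdots+y^{q-1})\in\mathbb{Z}[y].$$
Every monomial of $Q(x^{q^2})$ has exponent divisible by $q^2$, so after multiplying by $(x-1)$ the only surviving exponents are those $\equiv 0$ or $1\pmod{q^2}$. This gives the required support of $(x^p-1)g(x)$ and completes the proof.

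The only non-routine step is spotting the telescoping that reduces the product of three cyclotomic polynomials to a polynomial in $x^{q^2}$; once this observation is made, the rest of the argument is just reading off exponents modulo $q^2$.
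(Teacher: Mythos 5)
Your proof is correct and follows essentially the same strategy as the paper's: compute $(x^p-1)g(x)$, observe that its support lies in exponents $\equiv 0$ or $1\pmod{q^2}$, and read off $c_i=c_{i-p}$ for the remaining residues. Your telescoping derivation of $(x^p-1)g(x)=(x-1)Q(x^{q^2})$ with $Q(y)=(1+\cdots+y^{p-1})(1+\cdots+y^{q-1})$ is a tidier route to the identity $\sum_{i=0}^{p+q-2}a_i(x^{iq^2+1}-x^{iq^2})$ that the paper simply asserts.
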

\begin{proof} We have
\begin{align*}
(x^p-1)g(x)&= \sum_{i>deg g(x)-p}^{deg g(x)}c_{i}x^{i+p}+\sum_{i=0}^{deg g(x)-p}(c_{i}-c_{i+p})x^{i+p}+x-1  \\
&=\sum_{i=0}^{p+q-2}a_{i}(x^{iq^2+1}-x^{iq^2}),
\end{align*}
where
\begin{eqnarray}
a_{i}=
\begin{cases}
i+1, &\textrm{if}\ 0\leq i\leq p-1,\cr p, &\textrm{if}\ p\leq i\leq q,\cr p+q-1-i, & \textrm{if}\ q+1\leq i\leq p+q-2.
\end{cases}
\end{eqnarray}
If $i\not\equiv 0,\;1\ (\mbox{mod}\  q^2)$, the coefficient of $x^i$ in the polynomial $(x^p-1)g(x)$ is $0$, then we have $c_{i}-c_{i-p}=0$. Therefore $c_{i}=c_{i-p}$. Also we can show that $c_{i_{1}}=c_{i_{2}}$ if $rq^2+1<i_{1}<i_{2}<(r+1)q^2$ and $i_{1}\equiv i_{2}(\mbox{mod}\  p)$.\end{proof}
\qed

\begin{lem} \emph{([8])} For primes $p\neq q$, $\emph{B}(p^aq^b)\geq{\rm min}\{p^a, q^b\}$.\end{lem}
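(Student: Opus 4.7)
The plan is to exhibit an explicit divisor of $x^{p^aq^b}-1$ whose height is $\min\{p^a,q^b\}$. Without loss of generality assume $p^a\leq q^b$, and take
$$f(x)=\frac{x^{p^a}-1}{x-1}\cdot\frac{x^{q^b}-1}{x-1}=\bigl(1+x+\cdots+x^{p^a-1}\bigr)\bigl(1+x+\cdots+x^{q^b-1}\bigr).$$

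First I would verify $f(x)\mid x^{p^aq^b}-1$ using the cyclotomic factorization. We have $(x^{p^a}-1)/(x-1)=\prod_{i=1}^{a}\Phi_{p^i}(x)$ and $(x^{q^b}-1)/(x-1)=\prod_{j=1}^{b}\Phi_{q^j}(x)$; because $p\neq q$ the index sets $\{p^i\}$ and $\{q^j\}$ are disjoint, so these two subproducts share no common cyclotomic factor. Each $\Phi_{p^i}$ and $\Phi_{q^j}$ that appears is itself a factor of $x^{p^aq^b}-1$, since $p^i\mid p^aq^b$ and $q^j\mid p^aq^b$. Thus $f(x)$ is a product of distinct cyclotomic divisors of $x^{p^aq^b}-1$ and so divides it.

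Second, I would read off $\textrm{H}(f)$ by a direct coefficient count. The coefficient of $x^k$ in $f(x)$ is the number of pairs $(i,j)$ with $0\leq i\leq p^a-1$, $0\leq j\leq q^b-1$, and $i+j=k$. Choosing $k=p^a-1$ (which lies in $[0,q^b-1]$ since $p^a\leq q^b$), every $i\in[0,p^a-1]$ produces a valid $j=p^a-1-i$, giving exactly $p^a$ pairs. Hence $\textrm{H}(f)\geq p^a=\min\{p^a,q^b\}$, and therefore $\textrm{B}(p^aq^b)\geq\min\{p^a,q^b\}$. No step poses a serious obstacle here: once the right candidate $f$ is identified, the proof reduces to an elementary combinatorial height count together with the standard cyclotomic decomposition, and the only care required is to confirm that the two subproducts of $\Phi_d$'s do not overlap, which is immediate from $\gcd(p,q)=1$.
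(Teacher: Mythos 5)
Your proof is correct. The paper cites this lemma from reference [8] without reproducing a proof, but the argument you give — taking $f(x)=\frac{x^{p^a}-1}{x-1}\cdot\frac{x^{q^b}-1}{x-1}$, observing it is a product of distinct cyclotomic divisors of $x^{p^aq^b}-1$, and counting lattice points to get the middle coefficient $\min\{p^a,q^b\}$ — is the standard one and is the argument used in [8].
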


\section{\bf Proof of Conjectures 1.2 and 1.3}

 Now we are ready to prove our main results.

\textbf{Proof of Theorem 1.4.} Note that $\textrm{B}(pq)=\textrm{B}(pq^2)=p$, it suffices to consider the case $b>2$. For any $h(x)|(x^{pq^b}-1)$, there exist $f(x)|(x^{pq}-1)$
and $g(x)\mid \prod_{i=1}^{b-1}\Phi_{q^i}(x)\Phi_{pq^i}(x)$ such that $h(x)=f(x)g(x^q)$. We will first prove that $\textrm{H}(f(x)g(x^q))\leq p\textrm{H}(g(x^q)).$ We consider two cases.

{\bf Case 1.} $\textrm{T}(f(x))\leq p$ or ${\rm deg} f(x)<q$. By Lemma 2.1, we have $$\textrm{H}(f(x)g(x^q))\leq p\textrm{H}(g(x^q)).$$

{\bf Case 2.} $\textrm{T}(f(x))>p$ and ${\rm deg} f(x)\geq q$.

(a) $f(x)=\Phi_{pq}(x)$ or $\Phi_{q}(x^p)$ or $\Phi_{p}(x)\Phi_{q}(x)$ or $(x^p-1)\Phi_{q}(x)$ or $\Phi_{1}(x)\Phi_{p}(x^q)$ or $\Phi_{1}(x)\Phi_{q}(x^p)$ or $\Phi_{q}(x)\Phi_{p}(x^q)$.

By Lemma 2.1 and Lemma 2.2, it is easy to show that $$\textrm{H}(f(x)g(x^q))\leq p\textrm{H}(g(x^q)).$$

(b) $f(x)=\Phi_{1}(x)\Phi_{pq}(x)$.

Let $g(x^q)=\sum_{i=0}^{n}a_{qi}x^{qi}$ and \begin{eqnarray}
\begin{cases}
h_{1}(x)=\sum_{s=0}^{n}\sum_{i=0}^{\rho}\sum_{j=0}^{\sigma}a_{qs}x^{ip+jq+sq+1},\cr h_{2}(x)=\sum_{s=0}^{n}\sum_{i=0}^{q-\rho-2}\sum_{j=0}^{p-\sigma-2}a_{qs}x^{ip+jq+sq+1},\cr
h_{3}(x)=\sum_{s=0}^{n}\sum_{i=0}^{q-\rho-2}\sum_{j=0}^{p-\sigma-2}a_{qs}x^{ip+jq+sq+2},\cr
h_{4}(x)=\sum_{s=0}^{n}\sum_{i=0}^{\rho}\sum_{j=0}^{\sigma}a_{qs}x^{ip+jq+sq}.
\end{cases}
\end{eqnarray}
Then $f(x)g(x^q)=h_{1}(x)+h_{2}(x)-h_{3}(x)-h_{4}(x)$. We  consider the height of the polynomial $h_{1}(x)$.
Fix $(i_{1}, j_{1}, s_{1})$. If there exists $(i, j, s)$ such that $$ip+jq+sq=i_{1}p+j_{1}q+s_{1}q,$$ then we have $i=i_{1}$. Therefore the coefficient of $x^{i_{1}p+j_{1}q+s_{1}q}$ is
$\sum_{j+s=j_{1}+s_{1}}a_{qs}$. Since $|a_{qs}|\leq \textrm{H}(g(x^q))$ and $0\leq j\leq \sigma$, we have $$|\sum_{j+s=j_{1}+s_{1}}a_{qs}|\leq (\sigma+1)\textrm{H}(g(x)).$$ Hence $\textrm{H}(h_{1}(x))\leq (\sigma+1)\textrm{H}(g(x^q))$. Similarly, $\textrm{H}(h_{2}(x))\leq (p-\sigma-1)\textrm{H}(g(x^q))$ and then $\textrm{H}(h_{1}(x)+h_{2}(x))\leq p\textrm{H}(g(x^q)).$ We similarly show that $$\textrm{H}(h_{3}(x)+h_{4}(x))\leq p\textrm{H}(g(x^q)).$$By Proposition 2.3, we find that $h_{1}(x)+h_{2}(x)$ and $h_{3}(x)+h_{4}(x)$ have no same monomials.
Therefore $\textrm{H}(f(x)g(x^q))\leq p\textrm{H}(g(x^q)).$

Combining (1) and (2), one obtains $\textrm{H}(f(x)g(x^q))\leq p\textrm{H}(g(x^q))$.

Next we will prove that $\textrm{H}(\Phi_{p}(x)\Phi_{q}(x)g(x^q))=p\textrm{H}(g(x^q))$. Let $g(x^q)=\sum_{i=0}^{n}a_{qi}x^{qi}$ and $a_{qi_{0}}=\textrm{H}(g(x^q))$. The coefficient of $x^{qi_{0}+q-1}$ of $\Phi_{p}(x)\Phi_{q}(x)g(x^q)$ is $pa_{qi_{0}}$,
then we have $\textrm{H}(\Phi_{p}(x)\Phi_{q}(x)g(x^q))\geq p\textrm{H}(g(x^q))$. Therefore $$\textrm{H}(\Phi_{p}(x)\Phi_{q}(x)g(x^q))=p\textrm{H}(g(x^q))=p\textrm{H}(g(x)).$$

Let $\textrm{H}_{b}={\rm max}\{\textrm{H}(g(x))|\       g(x)\mid \prod_{i=1}^{b-1}\Phi_{q^i}(x)\Phi_{pq^i}(x)\}$. We have
$\textrm{B}(pq^{b})=p\textrm{H}_{b}$. This completes the proof of Theorem 1.4.\hfill$\Box$\\

This result shows that given any prime $q>p$, if we want to find the value
of $\textrm{B}(pq^b)$, it suffices to consider the value of $\textrm{H}_{b}$.
\vskip 2mm
\textbf{Proof of Theorem 1.5.}  To prove the theorem, we consider two cases separately.

{\bf Case 1.} $p=2$. Since $\textrm{B}(2q)=\textrm{B}(2q^2)=2$, it is sufficient to consider the case $b>2$.
By Theorem 1.4, $\textrm{B}(2q^b)=2$ is equivalent to $\textrm{H}_{b}=1$. We will first prove that $\textrm{H}_{b}=\textrm{H}_{b-1}$. Obviously, $\textrm{H}_{b}\geq\textrm{H}_{b-1}$. So we need to show $\textrm{H}_{b}\leq \textrm{H}_{b-1}$. For any $h(x)|\prod_{i=1}^{b-1}\Phi_{q^i}(x^2)$, there exist $f(x)|\Phi_{q}(x^2)$ and $g(x)|\prod_{i=2}^{b-1}\Phi_{q^i}(x^2)$ such that $h(x)=f(x)g(x)$. Then $\textrm{H}(h(x))=\textrm{H}(f(x)g(x))$.
If $f(x)=1$ or $\Phi_{q}(x)$ or $\Phi_{q}(x^2)$, by Lemma 2.2, we have $\textrm{H}(f(x)g(x))\leq\textrm{H}(g(x))\leq \textrm{H}_{b-1}.$
If $f(x)=\Phi_{2q}(x)$, then $\Phi_{2q}(x)=\Phi_{q}(-x)$. By Lemma 2.2, we have
$\textrm{H}(\Phi_{2q}(x)g(x))\leq \textrm{H}(g(x))\leq \textrm{H}_{b-1}.$
Then we have $\textrm{H}_{b}\leq \textrm{H}_{b-1}$.
By Lemma 2.5, we have $\textrm{H}_{3}=1$ and then $\textrm{H}_{b}=1$.
Therefore $\textrm{B}(2q^b)=1$.

{\bf Case 2.}  $p=3$. We will first prove that $\textrm{H}_{3}=\textrm{H}_{4}=2$. By Lemma 2.5, we have $\textrm{H}_{3}\leq 2$. Since $q>3$, we have $q\equiv  \pm 1\ (\mbox{mod}\  3)$. Without loss of generality assume that $q\equiv  -1\ (\mbox{mod}\  3)$, then $\sigma=1$ and we have the coefficient of $x^q$ of polynomial $\Phi_{pq}(x)\Phi_{q^{2}}(x)$ is $2$. Hence $\textrm{H}_{3}=2$.
 A similar argument can show that $\textrm{H}_{4}=2$.

Next we will prove that, if $b>4$, then $\textrm{H}_{b}\leq 2\textrm{H}_{b-2}$.
For any $h(x)|\prod_{i=1}^{b-1}\Phi_{q^i}(x^3)$, there exist $f(x)|\prod_{i=1}^{2}\Phi_{q^i}(x^3)$ and $g(x)|\prod_{i=3}^{b-1}\Phi_{q^i}(x^3)$ such that $h(x)=f(x)g(x)$. Then $\textrm{H}(h(x))=\textrm{H}(f(x)g(x))$.
If $f(x)\neq\Phi_{3q}(x)\Phi_{3q^2}(x)$, by Lemmas 2.2 and 2.4, we have
$\textrm{H}(f(x)g(x))\leq 2\textrm{H}(g(x))\leq 2\textrm{H}_{b-2}.$
If $f(x)=\Phi_{3q}(x)\Phi_{3q^2}(x)$, by Lemma 2.2, we have
$$\textrm{H}(f(x)g(x))\leq 2\textrm{H}(\Phi_{3q}(x)\Phi_{3q^2}(x))\textrm{H}(g(x))=2\textrm{H}(g(x))\leq 2\textrm{H}_{b-2}.$$
Therefore, if $b>4$, we have $\textrm{H}_{b}\leq 2\textrm{H}_{b-2}$.

Now we will prove that $\textrm{B}(3q^b)=3\cdot2^{[\frac{b-1}{2}]}$. Since $\textrm{H}_{3}=\textrm{H}_{4}=2$, we have $\textrm{H}_{b}\leq 2^{[\frac{b-1}{2}]}$.
If $b=2k+1$, we
consider the height of polynomial $\prod_{i=1}^{k}\Phi_{3q^{2i-1}}(x)\Phi_{q^{2i}}(x)$. Without loss of generality assume that $q\equiv  -1\ (\mbox{mod}\  3)$, then $\sigma=1$ and we have the coefficient of $x^{q(1+q^2+\cdot\cdot\cdot+q^{2k-2})}$ of polynomial $\prod_{i=1}^{k}\Phi_{pq^{2i-1}}(x)\Phi_{q^{2i}}(x)$ is $2^k$. Therefore we have $\textrm{H}_{2k+1}= 2^{k}$.
If $b=2k+2$, then $2^{k}=\textrm{H}_{2k+1}\leq \textrm{H}_{2k+2}\leq 2^{k}$.
This completes the proof of the case $p=3$.
 \hfill$\Box$\

{\bf Remark: } Let $p<q$ be distinct primes and $q\equiv  \pm 1\ (\mbox{mod}\  p)$.  For any positive integer $b$, we can show that $\textrm{B}(pq^b)=p\cdot(p-1)^{[\frac{b-1}{2}]}$.

\section{ \bf Explicit formula for $\textrm{B}(pq^3)$}

\textbf{Proof of Theorem 1.6.} Since $p,q$ are different primes, we have to deal with two cases.

 {\bf Case 1.}  $p<q$. By Theorem 1.4, it suffices to compute the value of $\textrm{H}_{3}$. So $\textrm{H}_{3}=\textrm{H}(g_{0}(x))$ follows from Lemma 2.5 where $g_{0}(x)=\Phi_{pq}(x)\Phi_{q^{2}}(x)$. Therefore we have $\textrm{B}(pq^3)=p\textrm{H}(g_{0}(x))$.
Now we will prove that $$\textrm{H}(\Phi_{pq}(x)\Phi_{q^{2}}(x))={\rm max}\{\sigma+1,\ p-(\sigma+1)\}.$$ By Proposition 2.3, we have the polynomial
$$\Phi_{pq}(x)\Phi_{q^{2}}(x)=\sum_{i=0}^{\rho}\sum_{j=0}^{\sigma}\sum_{s=0}^{q-1}x^{ip+jq+sq}-\sum_{i=0}^{q-\rho-2}\sum_{j=0}^{p-\sigma-2}\sum_{s=0}^{q-1}x^{ip+jq+sq+1}.$$ If there exist integers $0\leq i_{1},i_{2}\leq \rho$, $0\leq j_{1},j_{2}\leq \sigma$, $0\leq s_{1},s_{2}\leq q-1$ such that $x^{(i_{1}p+j_{1}q+s_{1}q)}=x^{(i_{2}p+j_{2}q+s_{2}q)}$, then we have $q\mid i_{1}-i_{2}$. Hence $i_{1}=i_{2}$. Fix $0\leq i\leq \rho$, the coefficient of $x^{ip+jq}$  in the polynomial$\sum_{i=0}^{\rho}\sum_{j=0}^{\sigma}\sum_{s=0}^{q-1}x^{ip+jq+sq}$ is at most $\sigma+1$ where $0\leq j\leq q+\sigma-1$. So $$\textrm{H}(\sum_{i=0}^{\rho}\sum_{j=0}^{\sigma}\sum_{s=0}^{q-1}x^{ip+jq+sq})\leq\sigma+1.$$
Similarly,
$\textrm{H}(\sum_{i=0}^{q-\rho-2}\sum_{j=0}^{p-\sigma-2}\sum_{s=0}^{q-1}x^{ip+jq+sq+1})\leq p-\sigma-1.$
Since the coefficient of $x^{\sigma q}$ in polynomial $\Phi_{pq}(x)\Phi_{q^{2}}(x)$ is $\sigma+1$ and the coefficient of $x^{(p-(\sigma+2))q+1}$ in polynomial $\Phi_{pq}(x)\Phi_{q^{2}}(x)$ is $p-(\sigma+1)$, we have $$\textrm{H}(\Phi_{pq}(x)\Phi_{q^{2}}(x))= {\rm max}\{(\sigma+1),\ (p-(\sigma+1))\}.$$
Therefore
$\textrm{B}(pq^{3})={\rm max}\{(\sigma+1)p,\ (p-(\sigma+1))p\}.$

 {\bf Case 2.} $p>q.$ For any $h(x)\mid x^{pq^3}-1$, there exist $g(x)\mid\Phi_{q^3}(x)\Phi_{pq^{3}}(x)$ and $f(x)|(x^{pq^2}-1)$ such that $h(x)=f(x)g(x)$. For any
 $f(x)|(x^{pq^2}-1)$, by Lemma 2.2, we have
$$\textrm{H}(\Phi_{q^3}(x)\Phi_{pq^{3}}(x)f(x))=\textrm{H}(\Phi_{q}(x^{pq^2})f(x))\leq {\rm min}\{p,\ q^2\}.$$
It suffices to consider polynomials like $\Phi_{q^{3}}(x)f_{1}(x)f_{2}(x)$ and $\Phi_{pq^{3}}(x)f_{1}(x)f_{2}(x)$ where $f_{1}(x)|\Phi_{p}(x)\Phi_{pq}(x)\Phi_{pq^2}(x)$ and $f_{2}(x)|(x^{q^{2}}-1)$.

First we consider the case $p>q^3$. From Lemmas 2.1 and 2.2, we have $$\textrm{H}(\Phi_{q^{3}}(x)f_{1}(x)f_{2}(x))\leq \textrm{T}(\Phi_{q^{3}}(x))\textrm{H}(f_{1}(x)f_{2}(x))\leq q^{3}.$$
$$\textrm{H}(\Phi_{pq^{3}}(x)f_{1}(x)f_{2}(x))\leq \textrm{T}(f_{2}(x))\textrm{H}(\Phi_{pq^{3}}(x)f_{1}(x))\leq q^{3}.$$
Since $\textrm{B}(pq^3)\geq \textrm{min}\{p,q^3\}$, we have $\textrm{B}(pq^3)=q^3$.

Next we consider the case $q<p<q^3$. There are a few difficult cases which we may consider separately.

(1) $\textrm{H}(\Phi_{q^{3}}(x)\Phi_{pq}(x)f_{2}(x))\leq p$.

(2) $\textrm{H}(\Phi_{pq^{3}}(x)\Phi_{pq}(x)f_{2}(x))\leq 2q$.

(3) $\textrm{H}(\Phi_{pq^{2}}(x)\Phi_{p}(x)f_{2}(x))\leq {\rm max}\{p, 2q\}$.

We prove the first and leave the rest to reader. If $f_{2}(x)=\Phi_{1}(x)\Phi_{q^{2}}(x)$. We have $\Phi_{1}(x)\Phi_{q^{2}}(x)\Phi_{q^{3}}(x)=\sum\limits_{j=0}^{q^2-1}x^{qi+1}-\sum\limits_{i=0}^{q^2-1}x^{qi}$. Now we will show that $$\textrm{H}(\Phi_{1}(x)\Phi_{pq}(x)\Phi_{q^{2}}(x)\Phi_{q^{3}}(x))\leq p.$$ This proof depends on Proposition 2.3. Let $0\leq k \leq p-1$. There do not exist coefficients $a_{n}$, $a_{m}$ of $\Phi_{pq}(x)$ with $n\equiv m \  (\mbox{mod}\  q)$
such that $a_{n}a_{m}=-1$. Suppose the coefficients $a_{n}$ with $n\equiv k \ (\mbox{mod}\  q)$ are all nonnegative. Proposition 2.3 implies that there are $\sigma+1$ positive coefficients in this set. Consider the set of coefficients $a_{l}$ where $l\equiv k-1\  (\mbox{mod}\  q)$.
These $a_{l}$ are either $0$ or $-1$. There are $p-(\sigma+1)$ negative coefficients in this set. Therefore
$\textrm{H}(\Phi_{1}(x)\Phi_{q^{2}}(x)\Phi_{q^{3}}(x)\Phi_{pq}(x))\leq p$. Hence $\textrm{H}(\Phi_{1}(x)\Phi_{pq}(x)\Phi_{q^{2}}(x)\Phi_{q^{3}}(x))\leq p.$ Similarly, if $f_{2}(x)\neq\Phi_{1}(x)\Phi_{q^{2}}(x)$, we have $\textrm{H}(\Phi_{q^{3}}(x)\Phi_{pq}(x)f_{2}(x))\leq p$.

 We will argue that the other divisors of $x^{pq^{3}}-1$ have height at most $\textrm{max}\{p, q^2\}$. We will do this by repeatedly applying lemmas and propositions we have already proven. We present the rest of the proof in the following chart(see Table 1).



\begin{tabular}{|c|c|c|}

\multicolumn{3}{c}{TABLE 1}\\
\multicolumn{3}{c}{}\\
\hline
$f(x)$&$\textrm{H}(\Phi_{pq^{3}}(x)f(x))\leq $&$\textrm{H}(\Phi_{q^{3}}(x)f(x))\leq $\\
\hline
    $f_{2}(x)$&    {$1$}&     $1$\\
\hline
    $\Phi_{p}(x)f_{2}(x)$&    {$p$}&     $p$\\
\hline
    $\Phi_{pq}(x)f_{2}(x)$&     $2q$&    $p$\\
\hline
    $\Phi_{pq^2}(x)f_{2}(x)$&   $p$&     $p$\\
\hline
    $\Phi_{p}(x)\Phi_{pq}(x)f_{2}(x)$&    $p$&     $p$\\
\hline
    $\Phi_{p}(x)\Phi_{pq^2}(x)f_{2}(x)$&     $p$&    max\{$p$,\ $q^2$\}\\
\hline
    $\Phi_{pq}(x)\Phi_{pq^2}(x)f_{2}(x)$&     $2q$&    max\{$p$,\ $2q$\}\\
\hline
    $\Phi_{p}(x)\Phi_{pq}(x)\Phi_{pq^2}(x)f_{2}(x)$&     $p$&    $p$\\
\hline
\end{tabular}

\vskip 2mm

From Table 1, we have

\begin{eqnarray}
\textrm{B}(pq^3)\leq
\begin{cases}
 p, &\textrm{if}\ q^{3}>p>q^2, \cr q^2, & \textrm{if}\ q^{2}>p>q.
\end{cases}
\end{eqnarray}
If $q^{3}>p>q^2$, by Lemma 2.7, we have $\textrm{B}(pq^3)\geq p$ and
then $\textrm{B}(pq^3)=p$. So we only need to consider the case $q^{2}>p>q$. If $h(x)\neq \Phi_{q^{3}}(x)\Phi_{pq^2}(x)\Phi_{p}(x)f_{2}(x)$, we have $\textrm{H}(h(x))\leq {\rm max}\{p,\  2q\}$ follows from Table 1. It is easy to show that
 \begin{eqnarray}
\textrm{H}(\Phi_{p}(x)\Phi_{pq^2}(x)\Phi_{q^{3}}(x)f_{2}(x))\leq
\begin{cases}
q^2, &\textrm{if}\ f_{2}(x)=\Phi_{q}(x),\cr 2q, &\textrm{if}\ f_{2}(x)=\Phi_{1}(x), \Phi_{1}(x)\Phi_{q}(x), \cr p, & \textrm{otherwise}.
\end{cases}
\end{eqnarray}
 So we have proved that if $h(x)\neq \Phi_{q}(x)\Phi_{p}(x)\Phi_{pq^2}(x)\Phi_{q^{3}}(x)$, $\textrm{H}(h(x))\leq {\rm max}\{p,\  2q\}$. Therefore, if $q^2>p>2q$, we have $\textrm{B}(pq^3)= {\rm max}\{p,\  \textrm{H}(\Phi_{p}(x)\Phi_{q}(x)\Phi_{pq^2}(x)\Phi_{q^{3}}(x))\}$. Now we will prove that, if $2q>p>q$, $\textrm{H}(\Phi_{p}(x)\Phi_{q}(x)\Phi_{pq^2}(x)\Phi_{q^{3}}(x))\geq 2q$. We consider two cases.

 \textbf{Case a.}   $q<{\rm min}\{\sigma+1, p-(\sigma+1)\}$. Since the the coefficient of $x^{\sigma q^2+2q-p}$ in the polynomial $\Phi_{p}(x)\Phi_{q}(x)\Phi_{pq^2}(x)\Phi_{q^{3}}(x)$ is $(2q-p+1)q$. However $(2q-p+1)q\geq 2q$, we have $\textrm{H}(\Phi_{p}(x)\Phi_{q}(x)\Phi_{pq^2}(x)\Phi_{q^{3}}(x))\geq 2q.$

 \textbf{Case b.}   $q\geq{\rm min}\{\sigma+1, p-(\sigma+1)\}$. Without loss of generality, we assume that $\sigma+1<p-(\sigma+1).$  Consider the coefficient of $x^{\sigma q^2+q-1}$ in the polynomial $\Phi_{p}(x)\Phi_{q}(x)\Phi_{pq^2}(x)\Phi_{q^{3}}(x)$. If there exist integers $0\leq m\leq p+q-2$, $0\leq n\leq (q-1)(q+p-1)$ such that $x^{m+nq}=x^{\sigma q^2+q-1}$, then we have $m=q-1$ or $m=2q-1$.
If $m=q-1$, we have $n=\sigma q$ and then the coefficient of $x^{q-1}$ of polynomial $\Phi_{p}(x)\Phi_{q}(x)$ is $q$, the coefficient of $x^{\sigma q^2}$ of the polynomial $\Phi_{pq^2}(x)\Phi_{q^3}(x)$ is $\sigma+1$. If $m=2q-1$, we have $n=\sigma q-1$ and then the coefficient of $x^{2q-1}$ in the polynomial $\Phi_{p}(x)\Phi_{q}(x)$ is $p-q$, the coefficient of $x^{\sigma q^2-q}$ in the polynomial $\Phi_{pq^2}(x)\Phi_{q^3}(x)$ is not less than $1-\sigma$.
Therefore the coefficient of $x^{\sigma q^2+q-1}$ in the polynomial $\Phi_{p}(x)\Phi_{q}(x)\Phi_{pq^2}(x)\Phi_{q^{3}}(x)$ is not less than $((\sigma+1)q-(p-q)(\sigma-1))$. However $((\sigma+1)q-(p-q)(\sigma-1))\geq 2q.$ Hence
$\textrm{H}(\Phi_{p}(x)\Phi_{q}(x)\Phi_{pq^2}(x)\Phi_{q^{3}}(x))\geq 2q.$
\qed

 {\bf Example 1.} $\textrm{B}(3\cdot5^3)=6$, $\textrm{B}(5\cdot3^3)=8=\textrm{H}(\Phi_{5}(x)\Phi_{3}(x)\Phi_{5\cdot3^2}(x)\Phi_{3^{3}}(x))$,
$\textrm{B}(7\cdot3^3)=7$, $\textrm{B}(11\cdot3^3)=11$, $\textrm{B}(13\cdot3^3)=13$, $\textrm{B}(29\cdot3^3)=\textrm{B}(31\cdot3^3)=27$.

The following corollary follow directly from Theorems 1.5 and 1.6.
\begin{cor}Let $p<q$ be odd primes and integer $b>2$.

\emph{(1)} If $p>3$, then $\emph{B}(pq^{b})\geq 3p$.

\emph{(2)}  If $p=3$, $\emph{B}(pq^{b})=2p$ if and only if $b=3$ or $4$.
\end{cor}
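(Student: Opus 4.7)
The corollary splits cleanly along $p=3$ versus $p\geq 5$, with each part deriving from one of the two preceding theorems plus a simple monotonicity in $b$.

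For part (2), I would apply Theorem 1.5 directly: setting $p=3$ gives $\mathrm{B}(3q^b)=3\cdot 2^{[(b-1)/2]}$, so the equation $\mathrm{B}(3q^b)=2p=6$ becomes $2^{[(b-1)/2]}=2$, i.e.\ $[(b-1)/2]=1$. This holds exactly for $b\in\{3,4\}$, which gives the ``if and only if'' statement since for $b\geq 5$ we have $[(b-1)/2]\geq 2$ and so $\mathrm{B}(3q^b)\geq 12>6$.

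For part (1), I would first record the monotonicity $\mathrm{B}(pq^b)\geq \mathrm{B}(pq^3)$ for $b\geq 3$: since $x^{pq^3}-1$ divides $x^{pq^b}-1$, every integer polynomial dividing the former also divides the latter, with its coefficients unchanged. It therefore suffices to prove $\mathrm{B}(pq^3)\geq 3p$ when $p\geq 5$. By Theorem 1.6 (with $p<q$) we have
\[
\mathrm{B}(pq^3)=p\cdot\max\{\sigma+1,\ p-\sigma-1\},
\]
so the task collapses to showing $\max\{\sigma+1,\ p-\sigma-1\}\geq 3$. This follows from a two-case split on $\sigma$: if $\sigma\geq 2$, then $\sigma+1\geq 3$; if $\sigma=1$ (the only remaining case since $\sigma$ is a positive integer by Proposition 2.3), then $p-\sigma-1=p-2\geq 3$ because $p\geq 5$.

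I do not anticipate a genuine obstacle; the entire argument is a short deduction once the monotonicity $\mathrm{B}(pq^b)\geq \mathrm{B}(pq^3)$ is observed. The only minor items to verify are the positivity of $\sigma$ (which is part of Proposition 2.3) and the divisibility $x^{pq^3}-1\mid x^{pq^b}-1$ (immediate from $pq^3\mid pq^b$ for $b\geq 3$).
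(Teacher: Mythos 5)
Your proof is correct and reconstructs exactly the argument the paper has in mind --- the paper simply asserts that the corollary ``follows directly from Theorems 1.5 and 1.6'' without spelling out details, and both parts do reduce to those theorems together with the monotonicity $\textrm{B}(pq^b)\geq \textrm{B}(pq^3)$ that you correctly justify from $x^{pq^3}-1\mid x^{pq^b}-1$ (equivalently, $\textrm{H}_b\geq\textrm{H}_3$ in the notation of Theorem 1.4). One small inaccuracy in part (1): your case split assumes $\sigma\geq 1$ on the grounds that Proposition 2.3 calls $\sigma$ a ``positive integer,'' but in fact $\sigma=0$ occurs precisely when $q\equiv 1\ (\mbox{mod}\ p)$, and the paper itself treats $\sigma=0$ in the proof of Corollary 4.3, so that wording in Proposition 2.3 is a misstatement inherited from the source. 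Your conclusion survives regardless, since $\sigma=0$ gives $p-\sigma-1=p-1\geq 4$. A uniform way to close all cases at once is to observe that $(\sigma+1)+(p-\sigma-1)=p$, so ${\rm max}\{\sigma+1,\ p-\sigma-1\}\geq p/2>2$ and hence is at least $3$ whenever $p\geq 5$.
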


\begin{cor} Let $p<q<r$ be primes and $q\equiv \pm r\ (\mbox{mod}\  p)$. Then $$\emph{B}(pq^{3})=\emph{B}(pr^{3}).$$\end{cor}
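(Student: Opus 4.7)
The plan is to reduce the assertion to Theorem 1.6 together with a short residue calculation for the parameter $\sigma$ modulo $p$. If $p=2$, Theorem 1.5 immediately gives $\emph{B}(2q^{3})=2=\emph{B}(2r^{3})$, so I may assume $p$ is an odd prime. Let $\sigma$ and $\sigma'$ denote the invariants of Proposition 2.3 associated with the pairs $(p,q)$ and $(p,r)$, respectively. Since $p<q$ and $p<r$, the last clause of Theorem 1.6 gives
$$\emph{B}(pq^{3})=p\cdot{\rm max}\{\sigma+1,\ p-(\sigma+1)\},\qquad \emph{B}(pr^{3})=p\cdot{\rm max}\{\sigma'+1,\ p-(\sigma'+1)\},$$
so it suffices to show that the two unordered pairs $\{\sigma+1,\,p-(\sigma+1)\}$ and $\{\sigma'+1,\,p-(\sigma'+1)\}$ coincide.

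To pin down $\sigma$, I would reduce $\rho p+\sigma q=pq-p-q+1$ modulo $p$ to get $\sigma q\equiv 1-q\pmod p$, i.e.\ $\sigma\equiv q^{-1}-1\pmod p$. The positivity requirement $\rho\ge 1$ forces $\sigma q\le (p-1)(q-1)-p$, so $\sigma\le p-2$; coupled with $\sigma\ge 1$, this means $\sigma$ is the unique representative of its residue class in $\{1,\ldots,p-2\}$. The same bounds and formula apply to $\sigma'$, with $\sigma'\equiv r^{-1}-1\pmod p$.

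The two cases of the hypothesis now dispose of the corollary. When $q\equiv r\pmod p$, we get $q^{-1}\equiv r^{-1}\pmod p$ and hence $\sigma\equiv\sigma'\pmod p$; uniqueness in $\{1,\ldots,p-2\}$ forces $\sigma=\sigma'$, and the two formulas trivially agree. When $q\equiv -r\pmod p$, we get $q^{-1}\equiv -r^{-1}\pmod p$, so $\sigma+\sigma'\equiv -2\equiv p-2\pmod p$; combined with $2\le \sigma+\sigma'\le 2p-4$, this forces $\sigma+\sigma'=p-2$, equivalently $\sigma'+1=p-(\sigma+1)$ and $p-(\sigma'+1)=\sigma+1$. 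The unordered pair $\{\sigma+1,\,p-(\sigma+1)\}$ is therefore invariant under the swap, so its maximum is unchanged, and $\emph{B}(pq^{3})=\emph{B}(pr^{3})$.

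The only mildly delicate step is confirming that both $\sigma$ and $\sigma'$ live in the common range $\{1,\ldots,p-2\}$, so that congruence modulo $p$ forces literal equality; everything else is arithmetic in $\mathbb{Z}/p\mathbb{Z}$ together with the involutive symmetry $t\leftrightarrow p-t$ built into the formula of Theorem 1.6.
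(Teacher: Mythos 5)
Your argument takes essentially the same route as the paper's: reduce via Theorem 1.6 to comparing the invariants $\sigma$ for $(p,q)$ and $(p,r)$, then use arithmetic modulo $p$ and the bounded range of $\sigma$ to force $\sigma=\sigma'$ (when $q\equiv r$) or $\sigma+\sigma'=p-2$ (when $q\equiv -r$). The paper phrases the mod-$p$ step by subtracting the two Sylvester identities rather than inverting $q$ modulo $p$, but the content is identical, and your explicit treatment of $p=2$ (where Theorem 1.6 doesn't apply) actually patches a small omission in the paper's proof, which invokes Theorem 1.6 without noting that it requires $p$ odd.

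One small slip: you assert $\sigma\ge 1$, but $\sigma=0$ occurs precisely when $q\equiv 1\pmod p$ (then $\rho p=(p-1)(q-1)$), and the paper itself uses this fact in the proof of the next corollary. The correct range is $\sigma\in\{0,1,\ldots,p-2\}$, matching the inequalities $0\le\sigma_1,\sigma_2<p-1$ the paper writes. This does not break your argument: with $\sigma,\sigma'\in\{0,\ldots,p-2\}$ one still has $|\sigma-\sigma'|<p$ forcing equality in the first case, and $0\le\sigma+\sigma'\le 2p-4$ together with $\sigma+\sigma'\equiv p-2\pmod p$ forcing $\sigma+\sigma'=p-2$ in the second, so the conclusion stands after that correction.
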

\begin{proof} Assume that $\rho_{1} p+\sigma_{1} q=(p-1)(q-1)$ and $\rho_{2} p+\sigma_{2} r=(p-1)(r-1)$ where $0\leq \sigma_{1},\sigma_{2}< p-1$ and
$0\leq \rho_{1}<q-1$, $0\leq \rho_{2}<r-1$. Then we have $(\rho_{1}-\rho_{2})p+(\sigma_{1} q-\sigma_{2} r)=(p-1)(q-r)$. If $q\equiv r\ (\mbox{mod}\  p)$, we have $(\sigma_{1} q-\sigma_{2} q)\equiv 0\ (\mbox{mod}\  p)$. Therefore $\sigma_{1}=\sigma_{2}$. If $q\equiv -r\ (\mbox{mod}\  p)$, we have $(\sigma_{1} q+\sigma_{2} q)\equiv -2q\ (\mbox{mod}\  p)$. Therefore $\sigma_{1}+\sigma_{2}+2=p$. By Theorem 1.6, we have $\textrm{B}(pq^{3})=\textrm{B}(pr^{3})$.
\end{proof} \qed

\begin{cor} Let $p<q$ be primes. Then $\emph{B}(pq^{3})=p(p-1)$ if and only if $q\equiv \pm 1\ (\mbox{mod}\  p)$.\end{cor}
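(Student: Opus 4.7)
The plan is to combine Theorem~1.6 with the congruence interpretation of the parameter $\sigma$ already used in Corollary~4.2. First I would dispose of $p=2$ trivially: Theorem~1.5 gives $\textrm{B}(2q^3)=2=p(p-1)$ for every odd prime $q$, while $q\equiv\pm 1\pmod{2}$ is automatic, so the equivalence holds with no work. Assume from now on that $p$ is odd.

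For odd $p<q$, the ``Moreover'' clause of Theorem~1.6 supplies the closed form $\textrm{B}(pq^3)=p\cdot\max\{\sigma+1,\,p-\sigma-1\}$, where $\rho,\sigma$ are the non-negative integers determined by $\rho p+\sigma q=(p-1)(q-1)$. The key identification is that reducing this defining relation modulo $p$ gives $\sigma q\equiv 1-q\pmod{p}$, equivalently $\sigma\equiv q^{-1}-1\pmod{p}$. Combined with the observation that $\sigma\in\{0,1,\ldots,p-2\}$ --- the upper bound holds because $\sigma=p-1$ would force $\rho p=(p-1)(q-1)-(p-1)q=-(p-1)<0$ --- this pins $\sigma$ down uniquely.

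With $\sigma\in[0,p-2]$ we automatically obtain $\textrm{B}(pq^3)\leq p(p-1)$, with equality if and only if $\max\{\sigma+1,\,p-\sigma-1\}=p-1$, i.e.\ if and only if $\sigma=0$ or $\sigma=p-2$. Substituting these two extremal values back into $\sigma\equiv q^{-1}-1\pmod{p}$ yields $q\equiv 1\pmod{p}$ and $q\equiv -1\pmod{p}$ respectively; the converse implications follow at once by running the same congruence computation in reverse. The only real obstacle in this plan is pinning down the exact range of $\sigma$ (so that ``$\sigma=0$'' and ``$\sigma=p-2$'' really are distinct admissible values carrying the claimed congruence meaning); once that is settled the equivalence reduces to two lines of modular bookkeeping, entirely in the spirit of Corollary~4.2.
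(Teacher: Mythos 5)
Your proposal is correct and follows essentially the same route as the paper: both start from the ``Moreover'' formula in Theorem~1.6 and translate the conditions $\sigma=0$ or $\sigma=p-2$ into $q\equiv\pm1\pmod p$ via the relation $\rho p+\sigma q=(p-1)(q-1)$. You make the modular bookkeeping slightly more systematic by packaging it as $\sigma\equiv q^{-1}-1\pmod p$, which handles both directions uniformly, and you explicitly note the range $0\le\sigma\le p-2$ (the paper relies on this implicitly, stating it only in the proof of Corollary~4.2). You also dispose of $p=2$ separately via Theorem~1.5; this is a sensible addition since Theorem~1.6 is stated for odd primes and the paper's proof of the corollary silently glosses over that case.
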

\begin{proof} By Theorem 1.6, $\textrm{B}(pq^{3})=p(p-1)$ if and only if $\sigma+1=p-1$ or $\sigma=0$.
If $\sigma+1=p-1$, then $q\equiv -1\ (\mbox{mod}\  p)$.
If $\sigma=0$, we have $\rho p=(p-1)(q-1)$ and then $q\equiv 1\ (\mbox{mod}\  p)$. Conversely, if $q\equiv \pm 1\ (\mbox{mod}\  p)$, it is easy to verify that $\textrm{B}(pq^{3})=p(p-1)$.
\end{proof} \qed
\vskip 2mm
\section{\bf Formulas for $\textrm{B}(pq^4)$ and $\textrm{B}(pq^5)$}

\textbf{Proof of Theorem 1.7.}  Note that by Theorem 1.4, it suffices to prove that $\textrm{H}_{4}={\rm max}\{\textrm{H}(g_{0}(x)),  \textrm{H}(\Phi_{pq}(x)\Phi_{pq^2}(x)\Phi_{q^3}(x))\}$ and $\textrm{H}_{5}=\textrm{H}^2(g_{0}(x))$. We will first prove that $\textrm{H}_{4}={\rm max}\{\textrm{H}(g_{0}(x)),  \textrm{H}(\Phi_{pq}(x)\Phi_{pq^2}(x)\Phi_{q^3}(x))\}$. For any $h(x)\mid \prod_{i=1}^{3}\Phi_{q^i}(x^p)$, there exist $g(x)|\prod_{i=2}^{3}\Phi_{q^i}(x^p)$ and $f(x)|\Phi_{q}(x^p)$ such that $h(x)=f(x)g(x)$, then $\textrm{H}(h(x))=\textrm{H}(f(x)g(x))$.
If $f(x)=1$ or $\Phi_{q}(x)$ or $\Phi_{q}(x^p)$, we have $$\textrm{H}(f(x)g(x))=\textrm{H}(g(x))\leq \textrm{H}(g_{0}(x)).$$
If $f(x)=\Phi_{pq}(x)$, by Lemma 2.4, we have $\textrm{H}(f(x)g(x))\leq \textrm{H}(g_{0}(x))\textrm{H}(g(x)).$ By Lemma 2.5, if $g(x)\neq \Phi_{pq^2}(x)\Phi_{q^3}(x)$, we have
$\textrm{H}(g(x))=1$. Hence for any $f(x)|\Phi_{q}(x^p)$, if $g(x)\neq \Phi_{pq^2}(x)\Phi_{q^3}(x)$, we have $\textrm{H}(f(x)g(x))\leq \textrm{H}(g_{0}(x)).$
Therefore $\textrm{H}_{4}={\rm max}\{\textrm{H}(\Phi_{pq}(x)\Phi_{q^2}(x)),  \textrm{H}(\Phi_{pq}(x)\Phi_{pq^2}(x)\Phi_{q^3}(x))\}.$

Next we will show that $\textrm{H}_{5}=\textrm{H}^2(g_{0}(x))$. For any $h(x)\mid \prod_{i=1}^{4}\Phi_{q^i}(x^p)$, there exist $f(x)|\prod_{i=1}^{2}\Phi_{q^i}(x^p)$ and $g(x)|\prod_{i=3}^{4}\Phi_{q^i}(x^p)$ such that $h(x)=f(x)g(x)$, then $\textrm{H}(h(x))=\textrm{H}(f(x)g(x))$. We consider four cases.

(1) If $\textrm{H}(f(x))=\textrm{H}(g(x))=\textrm{H}(g_{0}(x))$,
by Lemma 2.4, we have $$\textrm{H}(f(x)g(x))\leq \textrm{H}(\Phi_{pq}(x)\Phi_{q^2}(x))\textrm{H}(\Phi_{q^2}(x)g(x))=\textrm{H}^2(g_{0}(x)).$$
Consider the coefficients of $x^{\sigma q+\sigma q^3}$ and $x^{(p-\sigma-2)q+(p-\sigma-2)q^3}$ of $f(x)g(x)$, we have $\textrm{H}(f(x)g(x))\geq \textrm{H}^2(g_{0}(x)).$ Hence $\textrm{H}(f(x)g(x))=\textrm{H}^2(g_{0}(x))$.

(2) If $\textrm{H}(f(x))=\textrm{H}(g(x))=1$, then $$\textrm{H}(f(x)g(x))\leq p\textrm{H}(f(x))\textrm{H}(g(x))=p\leq\textrm{H}^2(g_{0}(x)).$$

(3) If $\textrm{H}(f(x))=1$ and $\textrm{H}(g(x))=\textrm{H}(g_{0}(x))$, by Lemma 2.2 and Lemma 2.4, it is easy to show that
$\textrm{H}(f(x)g(x))\leq \textrm{H}(g_{0}(x))\textrm{H}(g(x))\leq\textrm{H}^2(g_{0}(x)).$

(4) If $\textrm{H}(f(x))=\textrm{H}(g_{0}(x))$ and $\textrm{H}(g(x))=1$,
then $$\textrm{H}(f(x)g(x))\leq 2\textrm{H}(f(x))\textrm{H}(g(x))=2\textrm{H}(g_{0}(x)).$$

From above four cases, we have $\textrm{H}(f(x)g(x))\leq \textrm{H}^2(g_{0}(x))$.

Therefore we have $\textrm{B}(pq^5)=p\textrm{H}^2(g_{0}(x))$.

\hfill$\Box$\\

{\bf Example 2.} $\textrm{B}(5\cdot7^{4})= 5\textrm{H}(\Phi_{5\cdot7}(x)\Phi_{5\cdot7^{2}}(x)\Phi_{7^{3}}(x))=20>15=\textrm{B}(5\cdot7^{3})$ and $\textrm{B}(7\cdot17^{4})= \textrm{B}(7\cdot17^{3})=35>28=7\textrm{H}(\Phi_{7\cdot 17}(x)\Phi_{7\cdot 17^2}(x)\Phi_{17^3}(x))$.

\textbf{Proof of Theorem 1.8.}  If $b\leq2$, the Theorem is true since $\textrm{B}(pq^{b})=p$. By Corollary 4.2, the Theorem is true if $b=3$ or 5. By Theorem 1.5, $\textrm{B}(3q^{4})=6$, the Theorem is true if $b=4$ and $p=3$. Then it suffices to consider the case $p>3$ and $b=4$. Since $\textrm{B}(pq^{4})={\rm max}\{\emph{p}\textrm{H}(\Phi_{\emph{pq}}(x)\Phi_{\emph{pq}^{2}}(x)\Phi_{\emph{q}^{3}}(x)),\  \textrm{B}(\emph{pq}^{3})\}$, it is sufficient to show that
 $\textrm{H}(\Phi_{pq}(x)\Phi_{pq^{2}}(x)\Phi_{q^{3}}(x))=\textrm{H}(\Phi_{pr}(x)\Phi_{pr^{2}}(x)\Phi_{r^{3}}(x))$.

  Write $\Phi_{pq}(x)\Phi_{pq^2}(x)\Phi_{q^3}(x)=\sum_{i\geq 0}c_{i}x^{i}$ and $\Phi_{\emph{pr}}(x)\Phi_{\emph{pr}^{2}}(x)\Phi_{\emph{r}^{3}}(x)=\sum_{i\geq 0} d_ix^i.$  Set  $$l_{1}={\rm min}\{\emph{l}\mid |\emph{c}_{\emph{l}}|=\textrm{H}(\Phi_{\emph{pq}}(x)\Phi_{\emph{pq}^{2}}(x)\Phi_{\emph{q}^{3}}(x))\}.$$ Now we will show that $l_{1}<(2p-1)q^{2}$. By Lemma 2.6, we have $l_{1}\equiv 0\ (\mbox{mod}\  q^{2})$ or $l_{1}\equiv 1\ (\mbox{mod}\  q^{2})$. Without loss of generality assume $l_{1}=kq^{2}$. Then $c_{l_{1}}=c_{l_{1}-p}+p$. If $l_{1}\geq(2p-1)q^{2}$, there exist$1<k_{1}<q^{2}$ such that $kq^{2}-k_{1}p=k_{2}q^{2}+1$. If $k_{2}\geq p-1$, we have
$c_{k_{2}q^{2}+1}+p=c_{k_{2}q^{2}+1-p}$. By Lemma 2.6, we have $c_{l_{1}}=c_{k_{2}q^{2}+1-p}$ and $l_{1}>k_{2}q^{2}+1-p$. This contradicts the previous paragraph. So $k_{2}< p-1$ and $l_{1}<(2p-1)q^{2}$. Similarly, we have $l_{1}<(2p-1)q^{2}$ if $l_{1}\equiv 1\ (\mbox{mod}\  q^{2})$.
Next we will show that give any coefficient $c_{n}$ with $n<(2p-1)q^{2}$ of $\Phi_{pq}(x)\Phi_{pq^{2}}(x)\Phi_{q^{3}}(x)$, there exist some coefficient $d_{l_{n}}$ of $\Phi_{pr}(x)\Phi_{pr^{2}}(x)\Phi_{r^{3}}(x)$ with $c_{n}=d_{l_{n}}$.

For $n=[\frac{n}{q^{2}}]q^{2}+n_{0}$ with $0\leq n_{0}<q^{2}$, we can take $l_{n}=[\frac{n}{q^{2}}]r^{2}+n_{0}$. The coefficient of $x^{n}$ in the polynomial $(x^{p}-1)\Phi_{pq}(x)\Phi_{pq^{2}}(x)\Phi_{q^{3}}(x)$ equals to the coefficient of $x^{l_{n}}$ in polynomial $(x^{p}-1)\Phi_{pr}(x)\Phi_{pr^{2}}(x)\Phi_{r^{3}}(x)$, so we have $c_{n}-c_{n-p}=d_{l_{n}}-d_{l_{n}-p}$. We will prove the result $c_{n}=d_{l_{n}}$ by induction. If $n\leq p$, then it is clear that $c_{n}=d_{l_{n}}$. We assume that $c_{k}=d_{l_{k}}$ for $k\leq n$ and consider $c_{n+1}$. Let $n+1=[\frac{n+1}{q^{2}}]q^{2}+n_{0}$ where $0\leq n_{0}<q^{2}$, take $l_{n+1}=[\frac{n+1}{q^{2}}]r^{2}+n_{0}$. If $n_{0}\geq p$, we have $c_{n+1-p}=d_{l_{n+1}-p}$. Therefore $c_{n+1}=d_{l_{n+1}}$ because of $c_{n+1}-c_{n+1-p}=d_{l_{n+1}}-d_{l_{n+1}-p}.$ If $n_{0}<p$, by Lemma 2.6, we have $c_{n+1-p}=d_{[\frac{n+1}{q^{2}}]r^{2}+q^{2}+n_{0}-p}=d_{[\frac{n+1}{q^{2}}]r^{2}+r^{2}+n_{0}-p}=d_{l_{n+1}-p}$. Therefore $c_{n+1}=d_{l_{n+1}}$. So we have $\textrm{H}(\Phi_{pq}(x)\Phi_{pq^{2}}(x)\Phi_{q^{3}}(x))\leq\textrm{H}(\Phi_{pr}(x)\Phi_{pr^{2}}(x)\Phi_{r^{3}}(x))$.

Set $$l_{2}={\rm min}\{\emph{l}\mid |\emph{d}_{\emph{l}}|=\textrm{H}(\Phi_{\emph{pr}}(x)\Phi_{\emph{pr}^{2}}(x)\Phi_{\emph{r}^{3}}(x))\}.$$ Then $l_{2}<(2p-1)r^2$ and $l_{2}\equiv 0\ (\mbox{mod}\  q^{2})$ or $l_{2}\equiv 1\ (\mbox{mod}\  q^{2})$. Without loss of generality assume $l_{2}=kr^{2}$. As in the proof of $c_{n}=d_{l_{n}}$, we have $c_{kq^2}=d_{l_{2}}$. Therefore $$\textrm{H}(\Phi_{pq}(x)\Phi_{pq^{2}}(x)\Phi_{q^{3}}(x))\geq\textrm{H}(\Phi_{pr}(x)\Phi_{pr^{2}}(x)\Phi_{r^{3}}(x)).$$ This completes the proof of
$\textrm{H}(\Phi_{pq}(x)\Phi_{pq^{2}}(x)\Phi_{q^{3}}(x))=\textrm{H}(\Phi_{pr}(x)\Phi_{pr^{2}}(x)\Phi_{r^{3}}(x))$.

 \hfill$\Box$\\

This result shows that given any prime $p$, if we want to find the values of $\textrm{B}(pq^{b})$ where $q>p$, it is sufficient to consider $\frac{p-1}{2}$ distinct prime values of $q$. For example, we can easily get $\textrm{B}(5q^{4})=20$ and the following results
\begin{eqnarray}
\textrm{B}(5q^{3})=
\begin{cases}
20, &\textrm{if}\ q\equiv \pm 1\ (\mbox{mod}\  5),\cr 15, &\textrm{if}\ q\equiv \pm 2\ (\mbox{mod}\  5).
\end{cases}
\end{eqnarray}
\begin{eqnarray}
\textrm{B}(7q^{3})=
\begin{cases}
42, &\textrm{if}\ q\equiv \pm 1\ (\mbox{mod}\  7),\cr 28, &\textrm{if}\ q\equiv \pm 2\ (\mbox{mod}\  7),\cr 35, &\textrm{if}\ q\equiv \pm 3\ (\mbox{mod}\  7).
\end{cases}
\end{eqnarray}
\begin{eqnarray}
\textrm{B}(7q^{4})=
\begin{cases}
42, &\textrm{if}\ q\equiv \pm 1\ (\mbox{mod}\  7),\cr 35, &\textrm{otherwise}.
\end{cases}
\end{eqnarray}

 Ryan et al ([8]) have computed $\textrm{B}(n)$ for almost 300000 distinct $n$. All of the form $n=pq^b$ satisfy $\textrm{B}(pq^{b})=\textrm{B}(pr^{b})$
where $p<q<r$ and $q\equiv \pm r\ (\mbox{mod}\  p)$. One is tempted to believe the following

{\bf Conjecture 5.1.} Let $p<q<r$ be distinct odd primes. If $q\equiv \pm r\ (\mbox{mod}\  p)$, then $\textrm{B}(pq^{b})=\textrm{B}(pr^{b})$.

It is clear that the same propositions used in this section will be useful in studying $\textrm{B}(p^aq^b)$, however it is not at all clear what the resulting formula should be, even for $\textrm{B}(pq^4)$ if $p>q$ or $\textrm{B}(p^2q^2)$. The approach involves carefully analyzing many divisors and becomes impractical when the divisors becomes large.

{\noindent\bf Acknowledgments}
  I would like to thank Professor Qin Hourong and Xia Jianguo for their ideals and suggestions for this paper. I would like to thank Ji Qingzhong for several helpful suggestions.
I would also like to thank my friends, Fangyong and Wang Junfeng, who gave me their help and time in listening to me and helping me work out my problems during the difficult course of the paper.

\end{document}